\documentclass[11pt,svgnames]{article}

\usepackage{geometry}
\usepackage{graphicx}

\usepackage{amsthm}
\usepackage{amsmath}
\usepackage{amsfonts}
\usepackage{amssymb}

\newtheorem{theorem}{Theorem}
\newtheorem{lemma}[theorem]{Lemma}
\newtheorem{observation}[theorem]{Observation}
\newtheorem{proposition}[theorem]{Proposition}
\newtheorem{claim}[theorem]{Claim}
\newtheorem{corollary}[theorem]{Corollary}
\newtheorem{conjecture}[theorem]{Conjecture}

\usepackage{parskip}

\newcommand{\tlir}{\operatorname{tlir}}
\newenvironment{claimproof}[1]{%
  \begin{proof}[Proof of Claim~\ref{#1}]%
}{\end{proof}}

\title{Locally Irregular Total Colorings of Graphs}
\author{Anna Flaszczyńska\thanks{Department of Discrete Mathematics, AGH University of Krakow, Poland}, Aleksandra Gorzkowska\footnotemark[1], Igor Grzelec\footnotemark[1] \thanks{The corresponding author. Email:  grzelec@agh.edu.pl}, \\ Alfréd Onderko\thanks{Institute of mathematics, P.J. Šafárik University, Košice, Slovakia}, Mariusz Woźniak\footnotemark[1]}

\begin{document}
\maketitle

\begin{abstract}
A \textit{total graph} is an ordered triple $(V_0, V_1, E)$, where $V_0, V_1$ are the sets of empty and full vertices, respectively, $V_0 \cap V_1 = \emptyset$, and the set of edges $E$ is a subset of  \(\binom{V_0 \cup V_1}{2}\)  $(E\cap(V_0 \cup V_1)=\emptyset)$. A simple graph is a total graph in which all vertices are full. We say that a total graph $G$ is \textit{locally irregular} if every two adjacent vertices have different total degrees, where by the \textit{total degree} of a vertex $v$ in $G$ we mean the number of edges in $G$ that contain $v$ plus 1 if $v$ is full, or plus 0 if $v$ is empty. A total coloring of a graph $G$ whose colors induce locally irregular total subgraphs is called \textit{locally irregular total coloring}, and the minimum number of colors required in such a coloring of $G$ is denoted by $\tlir(G)$. In 2015, Baudon, Bensmail, Przybyło, and Woźniak conjectured that $\tlir(G)\leq 2$ for every graph $G$. In this paper, we prove this conjecture for cacti, subcubic graphs, and split graphs. We also provide a general upper bound for $\tlir(G)$ depending on the chromatic number of $G$, and a constant upper bound if $G$ is planar or outerplanar. In our proofs, we utilize special decompositions of graphs and the connection between acyclic vertex coloring and locally irregular total coloring.\\

    \textit{Keywords:} locally irregular total coloring, total graph, cactus graph, subcubic graph, split graph, $k$-chromatic graph. 
\end{abstract}

\section{Introduction}\label{introduction}

In this paper, we consider simple finite graphs and multigraphs. 
Let $G=(V, E)$ be a graph. We call a function $f: E \rightarrow \{1, 2, \dots, k\}$ an \textit{edge coloring} of $G$. We put $\sigma (v)=\sum \limits_{v \in e}f(e)$ for each vertex $v$ of $G$. We say that two vertices $u$ and $v$ are \textit{distinguished} if $\sigma (u)\neq \sigma (v)$. Such an edge coloring of $G$ can be interpreted as creating a multigraph from $G$ in which we replace each edge by $f(e)$ parallel edges. In that case, $\sigma (v)$ is the degree of the vertex $v$ in the multigraph $G'$ obtained from the graph $G$. Such a multigraph is \textit{locally irregular} if adjacent vertices have different degrees. The function $\sigma (v)$ creates a proper vertex coloring of $G$ if all neighboring vertices are distinguished in the edge coloring of $G$. This concept was first developed in 2004 by Karoński, Łuczak, and Thomason in~\cite{Karonski2004}. They introduced a parameter $\chi_{\Sigma}(G)$, which is the smallest $k$ such that in an edge coloring of $G$ all adjacent vertices are distinguished. Such a coloring is called \textit{neighbor-sum-distinguishing}. They also stated the following well-known conjecture in~\cite{Karonski2004}.  

\begin{conjecture}[1-2-3 Conjecture]
For every graph $G$ without isolated edges, $\chi_{\Sigma}(G)\leq 3$.
\end{conjecture}

Before its resolution, the 1-2-3 Conjecture garnered significant attention due to its elegance. At the beginning, for every graph $G$ without isolated edges, Addario-Berry, Dalal, McDiarmid, Reed, and Thomason proved that  $\chi_{\Sigma}(G)\leq 30$~\cite{Addario-Berry2007}. This upper bound was subsequently reduced to 16 by Addario-Berry, Dalal, and Reed~\cite{Addario-Berry2008} and to 13 by Wang and Yu~\cite{Wang2008}. Then, for many years, the best known result about this conjecture was that every graph without isolated edges admits $\chi_{\Sigma}(G)\leq 5$ and was shown by Kalkowski, Karoński, and Pfender in~\cite{Kalkowski2010}. In the last few years, the conjecture has been intensively investigated and in case of regular graphs Przybyło proved in~\cite{Przybylo2021} that every $d$-regular graph $G$, where $d\geq 2$, admits $\chi_{\Sigma}(G)\leq 4$ and if $d\geq 10^8$, then $G$ admits $\chi_{\Sigma}(G)\leq 3$. Next, Keusch proved that every graph without isolated edges admits $\chi_{\Sigma}(G)\leq 4$ in~\cite{Keusch2023}. In 2024, the 1-2-3 Conjecture was confirmed by Keusch in~\cite{Keusch2023}. Additionally, regarding computational complexity, Dudek and Wajc proved that determining whether a given graph $G$ admits  $\chi_{\Sigma}(G)= 2$ is NP-complete~\cite{Dudek2011}, while Thomassen, Wu, and Zhang showed that this problem is polynomial-time solvable for bipartite graphs~\cite{Thomassen2016}. 

The above-mentioned problem was an inspiration for Przybyło and Woźniak to state the total version of that problem in~\cite{Przybyło2010}. We call a function $g: V \cup E \rightarrow \{1, 2, \dots, k\}$ a \textit{total coloring} of $G$. We put $\sigma^t (v):=g(v)+\sum \limits_{v \in e}g(e)$ for each vertex $v$ of $G$. We say that two neighboring vertices $u$ and $v$ are \textit{distinguished} if $\sigma^t (u)\neq \sigma^t (v)$. The corresponding parameter $\chi^t_{\Sigma}(G)$ is the smallest $k$ such that in a total coloring of $G$ all adjacent vertices are distinguished. We call this coloring \textit{total neighbor-sum-distinguishing}. In 2010, Przybyło and Woźniak stated the following conjecture in~\cite{Przybyło2010}.

\begin{conjecture}[1-2 Conjecture]
For every graph $G$ we have $\chi^t_{\Sigma}(G)\leq 2$.
\end{conjecture}

Initially it was proved that $\chi^t_{\Sigma}(G)\leq 11$ for every graph~$G$~\cite{Przybyło2010}. Since a graph $G$ satisfies $\chi^t_{\Sigma}(G)= 1$ if and only if every pair of adjacent vertices has different degrees, Przybyło noted that regular graphs are the most difficult in~\cite{Przybylo2008}. He also improved the upper bound to $\chi^t_{\Sigma}(G)\leq 7$ for arbitrary regular graph~$G$~\cite{Przybylo2008}. Later, Kalkowski proved that every graph has a total neighbor-sum-distinguishing coloring where vertices are colored using colors from the set $\{1, 2\}$ and edges are colored using colors from the set $\{1, 2, 3\}$ in~\cite{Kalkowski2009}. Recently, the 1-2 Conjecture was proved for every regular graph by Deng and Qiu in~\cite{Deng2025}. Finally, in 2025, the 1-2 Conjecture was proved for an arbitrary graph by Deng and Qiu in~\cite{Deng202512}. Although the 1-2 Conjecture may be less widely known than the 1-2-3 Conjecture, it has played a crucial role in advancing related research. More precisely, among others, Kalkowski’s algorithmic approach to the 1-2 Conjecture inspired Kalkowski, Karoński, and Pfender to reduce the general constant upper bound for the number of colors in neighbor-sum-distinguishing edge coloring to 5.

Neighbor-sum-distinguishing edge coloring and total neighbor-sum-distinguishing coloring were an inspiration for Baudon, Bensmail, Przybyło, and Woźniak to state two new problems with a bit different approach to the problem of local irregularity of graphs in \cite{Baudon2015}. We start with recalling the notion of locally irregular graphs and presenting some results on locally irregular colorings that are, in fact, decompositions into locally irregular subgraphs.

We call a graph $G$ \textit{locally irregular} if every two adjacent vertices of $G$ have distinct degrees. By a  \textit{locally irregular coloring} of a graph $G$, we mean an edge coloring of $G$ in which color classes induce locally irregular subgraphs. The corresponding parameter $\mathrm{lir}(G)$, known as the \textit{locally irregular chromatic index}, is the minimum number of colors required in a locally irregular coloring of $G$ if such a coloring exists. 

We can easily check that odd-length paths and cycles do not have a locally irregular coloring. Baudon, Bensmail, Przybyło, and Woźniak proved that only odd-length paths, odd cycles, and a special family of cacti (i.e., connected graphs with edge-disjoint cycles) do not have locally irregular coloring (are not locally irregular colorable) in~\cite{Baudon2015}. They also stated the conjecture that for every locally irregular colorable graph $G$ we have $\mathrm{lir}(G)\leq 3$. 

Since all locally irregular uncolorable graphs are special cacti, Sedlar and Škrekovski investigate $\mathrm{lir}(G)$ for every colorable cactus $G$ in~\cite{Sedlar2024} and~\cite{Sedlar2021}. During their research~\cite{Sedlar2021}, a single cactus $B$, see~Figure~\ref{graph_B}, satisfying $\mathrm{lir}(B) = 4$ was found. The bow-tie graph $B$ remains the only known colorable graph $G$ that has $\mathrm{lir}(G)=4$. Due to this finding, Sedlar and Škrekovski improved the Local Irregularity Conjecture:
\begin{conjecture}[Local Irregularity Conjecture~\cite{Baudon2015}, \cite{Sedlar2024}]
\label{graph3}
Every connected graph $G \neq B$ that is locally irregular colorable satisfies $\mathrm{lir}(G)\leq 3$.
\end{conjecture}

\begin{figure}[h!]
    \centering
    \includegraphics[width=4cm]{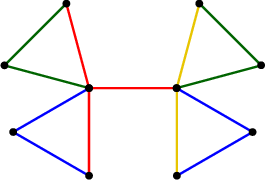}
    \caption{Locally irregular coloring of the bow-tie graph $B$ using four colors.}
    \label{graph_B}
\end{figure}

The Local Irregularity Conjecture was confirmed for some graph classes. Among others, trees~\cite{Baudon2015}, complete graphs~\cite{Baudon2015}, graphs with minimum degree at least $10^{10}$~\cite{Przybylo2016}, split graphs~\cite{Lintzmayer2021} and $r$-regular graphs, where $r\geq10^7$~\cite{Baudon2015}. For a planar graph $G$, it was proven that $\operatorname{lir}(G)\leq 15$ in~\cite{Bensmail2020} and for a subcubic graph $G$ that $\operatorname{lir}(G)\leq 4$ in~\cite{Luzar2018} (in both cases we assume that $G$ is locally irregular colorable). For every locally irregular colorable graph $G$, Bensmail, Merker, and Thomassen~\cite{Bensmail2017} proved that $\mathrm{lir}(G)\leq 328$, which was decreased to 220 by Lu\v zar, Przybyło, and Soták~\cite{Luzar2018}.

In this paper, we focus on the total version of the above-mentioned problem introduced in~\cite{Baudon2015}. We start with some definitions. A \textit{total graph} is an ordered
triple $(V_0, V_1, E)$, where $V_0$, $V_1$ are the sets of empty and full vertices, respectively, $V_0 \cap V_1 = \emptyset$, and the set of edges $E$ is a subset of  \(\binom{V_0 \cup V_1}{2}\)  $(E\cap(V_0 \cup V_1)=\emptyset)$. Note that a normal graph is a total graph in which all vertices are full. We call a total graph $G=(V_0, V_1, E)$ \textit{locally irregular} if for every edge $uv \in E$, the total degree of $u$ is distinct from the total degree of $v$, where by the \textit{total degree} of a vertex $v$ we mean the number of edges in $E$ containing $v$ plus 1 if $v \in V_1$ (or plus 0 if $v \in V_0$). By a \textit{locally irregular total coloring} of a graph $G$, we mean a total coloring of $G$ in which color classes induce locally irregular total subgraphs. The corresponding invariant $\mathrm{tlir}(G)$, called the \textit{locally irregular total chromatic index}, is the minimum number of colors required in a locally irregular total coloring of $G$. To keep our notation short, we will use TLIR coloring instead of locally irregular total coloring and TLIR $k$-coloring instead of locally irregular total coloring with $k$ colors.

Every locally irregular $k$-coloring of a graph $G$ can be extended to a TLIR $k$-coloring of $G$ by assigning the same color to all the vertices.
This observation provides us with $\tlir(G) \leq 220$ for every locally irregular colorable graph $G$ from Lu\v zar, Przybyło, and Soták's result~\cite{Luzar2018}. 
One of the results presented in this paper is that every cactus graph $G$ has $\tlir(G) \leq 2$.
Since all connected graphs that are not locally irregular colorable are special subcubic cacti, we get the general constant upper bound $\tlir(G) \leq 220$ for every graph $G$.

In~\cite{Baudon2015}, Baudon, Bensmail, Przybyło, and Woźniak formulated the following conjecture:    
\begin{conjecture}\label{main_conjecture}
    For every graph $G$, we have $\tlir(G) \leq 2$.
\end{conjecture}
Initially, this conjecture was proved for bipartite graphs.
To better utilize TLIR colorings of bipartite graphs, we reformulate this result in the following way:
\begin{proposition}\label{prop_bip}
    If $G=(X, Y, E)$ is a bipartite graph, then it admits a TLIR red-blue coloring such that every edge is red, every $x \in X$ has even total red degree, and every $y \in Y$ has odd total red degree.
\end{proposition}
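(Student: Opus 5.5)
All edges are colored red, and the only freedom is the color of each vertex. For a vertex $v$, its total red degree is $\deg(v)$ plus $1$ if $v$ is red, and plus $0$ otherwise. We choose the vertex colors by parity. For $x \in X$, we color $x$ red exactly when $\deg(x)$ is odd, so its total red degree $\deg(x)+[\deg(x)\text{ odd}]$ is even. For $y \in Y$, we color $y$ red exactly when $\deg(y)$ is even, so its total red degree $\deg(y)+[\deg(y)\text{ even}]$ is odd. This choice is forced by the parity requirements in the statement, and it always succeeds, because adding $0$ or $1$ can fix the parity of any integer.

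Next we check that both color classes induce locally irregular total subgraphs. In the red total subgraph $(V_0,V_1,E)$, the red vertices are full, the blue vertices are empty, and all edges are present. Every edge $xy$ has $x \in X$ and $y \in Y$, since $G$ is bipartite. Then $x$ has even total red degree and $y$ has odd total red degree, so the two degrees differ. Hence the red subgraph is locally irregular.

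In the blue total subgraph there are no edges, since every edge is red. The irregularity condition only constrains pairs $uv$ with $uv$ an edge of the subgraph, so it holds vacuously. Hence the blue subgraph is locally irregular as well, and the coloring is a TLIR red-blue coloring with the stated properties.

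There is no real obstacle; the only point that needs care is that the blue class, consisting only of isolated full vertices, is trivially locally irregular under the definition.
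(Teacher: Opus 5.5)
Your proof is correct and is the same as the paper's. Like the paper, you color every edge red, choose each vertex's color by the parity of its degree so that $X$ gets even and $Y$ odd total red-degrees, and note that the edgeless blue class is trivially locally irregular.
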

\begin{proof}
    Color every edge of $G$ red. If, for some $x \in X$, $\deg_G(x)$ is even, color it blue; otherwise, color it red. 
    Similarly, $y \in Y$ is colored blue if $\deg_G(y)$ is odd, and it is colored red otherwise.
    Total red degrees are even for vertices in $X$, and odd for vertices in $Y$. 
    The blue subgraph is edgeless, so it is also locally irregular.
\end{proof}

Results of Deng and Qiu~\cite{Deng202512, Deng2025} provide us with a total neighbor-sum-distinguishing coloring of every graph $G$ using weights 0 and 1 on edges and vertices of $G$.
These weights may represent colors of edges, e.g., elements of $G$ with weight 0 are blue, and those with weight 1 are red.
Since neighbors are distinguished by sums, they have different degrees in the red subgraph of $G$. From regularity, we get that they also have different degrees in the blue subgraph, since the sum of the red and blue degrees of every vertex is the constant $k+1$.
We get
\begin{proposition}[Corollary of Theorem 1 in~\cite{Deng2025}]\label{prop_reg}
    If $G$ is a regular graph, then $\tlir(G) \leq 2$.
\end{proposition}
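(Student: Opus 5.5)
The plan is to translate the Deng--Qiu theorem into the total-degree language directly. Let $G$ be $k$-regular. By the resolution of the 1-2 Conjecture in~\cite{Deng2025} (stated there with weights $\{0,1\}$, or equivalently obtained from the $\{1,2\}$ version by subtracting $1$ from every weight), there is a total weighting $w: V\cup E\to\{0,1\}$ such that for every edge $uv$ we have $\sigma^t(u)\neq\sigma^t(v)$, where $\sigma^t(v)=w(v)+\sum_{v\in e}w(e)$. The shift between $\{1,2\}$ and $\{0,1\}$ is harmless: it lowers every $\sigma^t(v)$ by exactly $k+1$, because each vertex lies in exactly $k$ edges plus itself. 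Regularity is used here to preserve distinctness.

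Next, define the coloring: elements of weight $1$ are red and elements of weight $0$ are blue. The red total subgraph consists of the red edges. A vertex is full in it if it is red and empty otherwise. Its total red degree is then exactly $\sigma^t(v)$, so adjacent vertices joined by a red edge have distinct red total degrees. The red total subgraph is therefore locally irregular.

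For the blue class, note that every vertex $v$ has total blue degree $(k+1)-\sigma^t(v)$. This holds because its $k$ incident edges and $v$ itself are each either red or blue. Hence for any edge $uv$, in particular a blue one, the blue total degrees differ, since the red ones do. This gives a TLIR $2$-coloring, so $\tlir(G)\le 2$.

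The only point needing care is the convention on weights in the cited theorem: whether it is stated with $\{1,2\}$ or $\{0,1\}$, and whether it applies to all graphs, including those with isolated edges. For a regular graph the conversion is valid as shown. An isolated edge as a component, with $k=1$, is also covered, since the theorem applies to every graph. Nothing else is an obstacle; the argument is a direct corollary.
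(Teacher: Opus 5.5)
Your proposal is correct and is essentially the paper's argument. The paper also reads the $\{0,1\}$ total weighting of Deng and Qiu as a red--blue coloring, so that the total red degree is $\sigma^t(v)$ and, by $k$-regularity, the total blue degree is $k+1-\sigma^t(v)$; adjacent vertices therefore have distinct degrees in both color classes.
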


In this paper, we prove that Conjecture~\ref{main_conjecture} holds for cactus graphs, subcubic graphs, and split graphs (see Theorems~\ref{thm_cactus}, \ref{thm_subcubic}, and~\ref{thm_split}).

In Section~\ref{sec:chromatic}, we provide an upper bound on $\tlir(G)$ that $\tlir(G) \leq 2\chi(G) - 2$ if the chromatic number $\chi(G)$ is at least 2 (Theorem~\ref{thm_k_chromatic}).
Note that, for graphs $G$ with small chromatic number, in particular $\chi(G) \leq 110$, the bound from Theorem~\ref{thm_k_chromatic} is better than the general result $\tlir(G) \leq 220$ derived from the result of Lu\v zar, Przybyło, and Soták~\cite{Luzar2018}.  

Finally, in Section~\ref{sec:acyclic_star_tlir} we describe a connection between acyclic vertex colorings, star colorings, and TLIR colorings.
Using this, we prove that $\tlir(G) \leq 3$ if $G$ is outerplanar, and $\tlir(G) \leq 5$ if $G$ is planar (Theorems~\ref{thm_planar} and~\ref{thm_outerplanar}).

\section{Proof of Conjecture~\ref{main_conjecture} for cacti, subcubic, and split graphs}
    First, we prove a lemma that is useful for proving that Conjecture~\ref{main_conjecture} holds for cacti and subcubic graphs.

    \begin{lemma}\label{lemma_pendant_tree}
        Let $v$ be a vertex of a graph $G$ with $\deg_G(v) \leq 2$. Let $T$ be a tree such that $V(T) \cap V(G) = \{v\}$. 
        If $\tlir(G) \leq 2$, then there is a red-blue TLIR coloring of $G \cup T$ where all edges of $T$ have the same color, and all neighbors of $v$ in $T$ have the same color. 
    \end{lemma}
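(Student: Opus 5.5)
We start from a red-blue TLIR coloring $c$ of $G$ and extend it to $T$ greedily from the root $v$ outward. The only choices we keep free are: the common color $\alpha$ of the edges of $T$, possibly a new color for $v$ itself, and the common color of the $T$-neighbors of $v$. Let $\beta$ denote the other color, $d=\deg_T(v)\ge 1$, and $D$ the total $\alpha$-degree of $v$ in $G\cup T$. Then $D$ equals the $\alpha$-degree of $v$ in $G$ plus $d$. Every vertex of $T$ other than $v$ has only $\alpha$-edges, so in the $\beta$-subgraph it is isolated or of total degree $1$ with no edges. Hence the $\beta$-class causes no problems inside $T$, and only $\alpha$-degrees along the edges of $T$ matter.

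\textbf{Inside $T$ (easy part).} Root $T$ at $v$ and process the vertices in BFS order. Consider a vertex $w$ at depth at least $2$ with parent $p$, whose $\alpha$-degree is already fixed. The $\alpha$-degree of $w$ is $\deg_T(w)$ if $w$ is colored $\beta$, and $\deg_T(w)+1$ if it is colored $\alpha$. These two values differ, so at least one of them differs from the $\alpha$-degree of $p$, and we pick that color. The constraint between $w$ and each of its children is handled later, when the child is colored. So everything reduces to the children of $v$, which must all get one color. With all children colored $\alpha$ we need $\deg_T(w)+1\neq D$ for every child $w$; with all children colored $\beta$ we need $\deg_T(w)\neq D$ for every child $w$.

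\textbf{At $v$ (main obstacle).} We must simultaneously satisfy the constraints towards the at most two $G$-neighbors of $v$ and the constraints towards the children. Recoloring $v$ shifts both its red and its blue degree by one. This may break irregularity on a $G$-edge $vu$, in the color of that edge. The constraint from the edge $vu$ forbids at most one value of the degree of $v$ in the color of $vu$, so for fixed $\alpha$ it excludes at most one of the two colors of $v$. Since $\deg_G(v)\le 2$, for a fixed $\alpha$ we have $2$ choices for the color of $v$ times $2$ choices for the children's color, against at most two $G$-constraints.

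The children's constraints depend only on $D$. I would first try to choose things so that $D$ avoids the offending values. Note that $D\ge d$, while the children's $\alpha$-degrees lie in $\{\deg_T(w),\deg_T(w)+1\}$ and can be arbitrary, so this is not automatic. Here I would use the option of switching $\alpha$ between red and blue. The $\alpha$-degree of $v$ in $G$ then differs between the two choices, giving up to four distinct admissible values of $D$ overall. I expect the proof to reduce to a case analysis on $\deg_G(v)\in\{0,1,2\}$ and on the colors of $v$ and its $G$-edges in $c$. In each case one exhibits a pair (color of $v$, children color) for some $\alpha$ that works. When $c$ already satisfies the $G$-constraints, keeping $v$'s original color is always one valid option, and this should make the count go through. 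This case check is the step I expect to be the hardest.
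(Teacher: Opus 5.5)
There is a genuine gap, and it is exactly the step you left for last. Insisting that all children of $v$ receive one common color cannot be completed by any case analysis. The values of $D$ forbidden by the children are dictated by their degrees, which are arbitrary. Your remaining freedom (the choice of $\alpha$ and recoloring $v$) only produces a bounded set of values for $D$.

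For instance, let $v$ be isolated in $G$ (e.g.\ $G=K_1$). Let $T$ consist of $v$ and three neighbors $w_1,w_2,w_3$, where $w_i$ has $i$ further leaf neighbors, so $d=3$ and $\deg_T(w_i)=i+1$. For either choice of $\alpha$ we have $D\in\{3,4\}$. If all $w_i$ are colored $\alpha$, their $\alpha$-degrees are $3,4,5$; if all are colored $\beta$, they are $2,3,4$. Either way every admissible $D$ collides with some child.

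So the statement, read literally, is false, and the paper's proof does not establish it either. The paper colors $T$ via Proposition~\ref{prop_bip}, under which neighbors of $v$ whose degrees have different parity get different colors. What the proof actually gives is weaker: all edges of $T$ share a color, and neighbors of $v$ whose degrees have equal parity share a color. In particular all leaf neighbors of $v$ share a color, which is all the cactus argument later uses (the leaf neighbors $x_1,x_n$ of $x$ in Case~4).

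The missing idea is to drop the common-color requirement and let parity take care of every edge of $T$, including those at $v$. The paper uses the coloring of $T$ from Proposition~\ref{prop_bip} and splits into two cases.
\begin{itemize}
\item If the $G$-edges at $v$ use at most one color, say blue, it colors the edges of $T$ red and keeps $c(v)$. Then the blue degree of $v$ is unchanged, and its red degree equals its red degree in $T$, so nothing breaks.
\item If $v$ has one red and one blue $G$-edge, it uncolors $v$. The four choices of (edge color of $T$, color of $v$) contribute $(0,d+1)$, $(1,d)$, $(d,1)$, $(d+1,0)$ to the red and blue degrees of $v$. Hence one of them avoids both values forbidden by the two $G$-neighbors. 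The extra $G$-edge of color $\alpha$ may destroy the parity alternation at $v$; this is repaired by swapping the colors of all vertices of $T-v$.
\end{itemize}

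Your own greedy rule would serve equally well if applied to each child of $v$ separately. Each child has two possible $\alpha$-degrees, one of which differs from $D$, and children of equal degree can be given the same color. This leaves only the $G$-constraints at $v$, handled as above.

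Note also that keeping $c(v)$ is safe only when $\alpha$ differs from the colors of the $G$-edges at $v$. In the mixed case, a fixed $\alpha$ may admit no valid color for $v$ at all. That is why $\alpha$ and the color of $v$ have to be chosen jointly.
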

    \begin{proof}
        Consider a TLIR red-blue coloring $c$ of $G$. 
        
        First, suppose that there is at most one color used on edges incident to $v$ in $c$ ($v$ may be an isolated vertex in $G$).
        Let this color be blue. 
        Take a coloring $c'$ of $T$ where all edges are red, and $v$ has color $c(v)$, see Proposition~\ref{prop_bip}.
        Colorings $c$ and $c'$ form a TLIR 2-coloring of $G \cup T$ where all edges of $T$ have the same color.

        Now, suppose that both colors are used on edges incident to $v$.
        Hence $\deg_G(v) = 2$.
        Let $c(u_1v)$ and $c(u_2v)$ be red and blue, respectively. 
        Uncolor $v$ to obtain a partial coloring $c_0$ of $G \cup T$, where $T$ is not colored (neither vertices nor edges).

        Consider all four possible red-blue TLIR colorings of $T$ from Proposition~\ref{prop_bip}: all edges red and $v$ blue or red, or all edges blue and $v$ blue or red.
        Observe these colorings of $T$ produce different red and blue degrees of $v$: $0$ and $\deg_T(v)+1$, $1$ and $\deg_T(v)$, $\deg_T(v)$ and $1$, or $\deg_T(v) + 1$ and $0$.
        At least one of these colorings does not produce a conflict on $u_1v$ and $u_2v$ when combined with $c_0$.

        However, in such a coloring of $G \cup T$, there is one more red and blue edge incident to $v$, which could create a conflict between $v$ and its neighbors in $T$. If this is the case, switch the colors of all vertices in $T$ except $v$.
    \end{proof}

\subsection{Cacti}

    Let $C$ be a cycle of a cactus $G$.
    If $G - E(C)$ contains at most one component with cycles, then we call $C$ pendant.
    On the other hand, a tree $T$ in $G$ is called pendant if $G - V(T)$ has one connected component.

    \begin{observation}
        Every cactus with at least one cycle contains a pendant cycle.
    \end{observation}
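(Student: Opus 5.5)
The observation concerns a cactus $G$ (connected, with edge-disjoint cycles) containing at least one cycle. I would exhibit a pendant cycle through the block--cut tree of $G$, or equivalently through a tree structure on the cycles.

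First I build an auxiliary tree $\mathcal{T}$. Its nodes are the cycles of $G$. Two cycles $C, C'$ are made adjacent when the path in $G$ between them meets no other cycle. More concretely, I contract every cycle of $G$ to a single vertex. Since cycles are edge-disjoint and $G$ is a cactus, the resulting graph is a tree $T^*$; any cycle in $T^*$ would lift to a cycle of $G$ sharing an edge with, or distinct from but overlapping, the contracted ones, contradicting the cactus property. Then I let $\mathcal{T}$ be the tree obtained from $T^*$ by keeping only the cycle-nodes, connecting them along the unique paths in $T^*$. If $G$ has exactly one cycle $C$, then $G - E(C)$ consists of trees hanging off the vertices of $C$. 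So it has no component with cycles, and $C$ is pendant.

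Otherwise $\mathcal{T}$ has at least two nodes, so it has a leaf cycle $C$. Every other cycle of $G$ lies, in $T^*$, on the same side of $C$: since $C$ is a leaf among cycle-nodes, all other cycle-nodes lie in a single component of $T^* - C$. Lifting back to $G$, the components of $G - E(C)$ correspond to the vertices of $C$ together with their hanging parts. All other cycles lie in the part attached at a single vertex $w \in V(C)$, namely the vertex through which the path to the rest of the cycles leaves $C$. This uses that $T^*$ is a tree, so that path is unique. Hence every other component of $G - E(C)$ is a tree, and $C$ is pendant.

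The main obstacle is the justification that all other cycles lie in one component of $G - E(C)$. The point to verify is that two other cycles cannot be attached at different vertices $w_1, w_2$ of $C$ while lying in a common component of $G - E(C)$. If they did, a $w_1$–$w_2$ path avoiding $E(C)$ together with an arc of $C$ would form a cycle sharing edges with $C$, which is impossible in a cactus. This same argument gives the tree structure directly, so I can avoid contraction altogether. I would take a longest sequence of cycles $C_1, \dots, C_m$ in which consecutive cycles are joined by paths avoiding other cycles, and argue that $C_1$ is pendant by this maximality.
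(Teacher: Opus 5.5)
\textbf{Gap in the contraction step.} Contracting every cycle does not give a tree whose nodes are the cycles. In a cactus, distinct cycles may share a cut vertex, and contracting them merges them into one node. In fact $T^*$ is the bridge tree of $G$: its nodes are the $2$-edge-connected components, i.e.\ whole clusters of cycles glued at vertices. In the bow-tie graph $B$, for example, $T^*$ is a single vertex. So your claim that $\mathcal{T}$ has at least two nodes whenever $G$ has at least two cycles is false.

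Worse, a leaf of $\mathcal{T}$ may be a cluster containing non-pendant cycles. Let $C_1,C_2$ share a vertex $a$, let $C_3,C_4$ share a vertex $c$, and join $b\in V(C_2)\setminus\{a\}$ to $d\in V(C_3)\setminus\{c\}$ by a bridge. Then $T^*$ has exactly two nodes, both leaves, and neither tells you which cycle to take. Here $C_2$ is not pendant, since $G-E(C_2)$ has cycles in the components of both $a$ and $b$.

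Your first, informal adjacency does not rescue this either. Three cycles through a common vertex are pairwise adjacent, so that graph need not be a tree or have any leaf. Your lifting step is valid only when the leaf node is a single cycle. The paper avoids this by working in the block--cut tree, where cut vertices are separate nodes, and pruning leaves that are bridges or cut vertices. The remaining leaves are cycle blocks, and these are pendant cycles.

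\textbf{The step you call the main obstacle, and how to fix the argument.} The arc-plus-path argument proves less than you need. It shows only that distinct vertices of $C$ lie in distinct components of $G-E(C)$, which holds for every cycle of a cactus, pendant or not. The real content is the extremal choice of $C$. Your closing ``longest sequence'' idea is the right kind of fix, but it is only asserted. It goes through as follows:
\begin{itemize}
\item Define $C\sim C'$ when some $V(C)$--$V(C')$ path, possibly of length $0$, consists of bridges only.
\item Take a longest sequence $C_1,\dots,C_m$ of distinct cycles with $C_j\sim C_{j+1}$.
\item Bridge paths avoid $E(C_1)$, so $C_2,\dots,C_m$ all lie in one component of $G-E(C_1)$.
\item If $C_1$ were not pendant, some other component $K$ would contain a cycle. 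Take a cycle of $K$ nearest to $V(C_1)$. A shortest path to it uses only bridges, by minimality. So this cycle could be prepended to the sequence, contradicting maximality.
\end{itemize}
An equivalent extremal argument: choose a cycle $C$ and $w\in V(C)$ maximizing the number of cycles in the component of $G-E(C)$ containing $w$. Suppose another component of $G-E(C)$ contained a cycle $C'$. Then the component of $G-E(C')$ containing $C$ would contain all cycles of the first component together with $C$ itself, contradicting the maximal choice.
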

    \begin{observation}\label{cactus_good_x}
        Every cactus $G$ contains a vertex $x$ such that at least $\deg_G(x)-2$ edges incident to $x$ lie on pendant cycles or pendant trees.
    \end{observation}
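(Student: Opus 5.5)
We plan to prove the statement by a depth argument in the block–cut tree of $G$; we may assume $G$ is connected. Throughout, an edge $xy$ \emph{lies on a pendant tree} if $y$ belongs to a component $T$ of $G-x$ that is a tree. Then $T$ is a pendant tree: $G-V(T)$ is exactly the union of $x$ with the other components of $G-x$, and this set is connected through $x$. We adopt this reading of ``lie on pendant trees'' since the statement does not pin it down.

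\textbf{Degenerate cases.} If $G$ is a tree, let $x$ be the second-to-last vertex of a longest path. All neighbours of $x$ except at most one are leaves. Each leaf is a one-vertex component of $G-x$, hence a pendant tree, so at most one edge at $x$ fails to count.

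If $G$ has exactly one cycle $C$, pick any $x\in V(C)$. The cycle $C$ is pendant, since $G-E(C)$ has no cycles at all, so both edges of $C$ at $x$ count. Every other edge $xy$ leads to a component of $G-x$ that contains no cycle, hence is a pendant tree. Thus in these cases every edge at $x$ counts.

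\textbf{General case.} Assume $G$ has at least two cycles. Root the block–cut tree of $G$ at some cycle block $B_0$. Among all cut vertices $x$ having a cycle block among their descendants in the rooted block–cut tree, choose one of maximum depth. Such an $x$ exists because at least two cycle blocks exist, so some cycle block other than $B_0$ lies below a cut vertex.

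Consider the branches of $G$ hanging at $x$, one for each child block of $x$.
\begin{itemize}
\item \emph{The child block is a cycle $C$.} By maximality of depth, no cut vertex below $C$ has a cycle block below it. Hence everything attached to $C$ below it is a forest. Then $G-E(C)$ consists of trees hanging off $C$ together with the single component containing $x$ and the root side. At most one component of $G-E(C)$ therefore contains cycles, so $C$ is pendant. Both edges of $C$ at $x$ count.
\item \emph{The child block is a bridge $xy$.} If the branch through $y$ contained a cycle, then $y$, or a deeper cut vertex on the way to that cycle, would be a deeper cut vertex with a cycle descendant. This contradicts the choice of $x$. Hence the component of $G-x$ containing $y$ is a tree, and $xy$ lies on a pendant tree.
\end{itemize}
The only edges at $x$ not yet accounted for belong to the unique parent block of $x$. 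Since $G$ is a cactus, that block is a bridge or a cycle, so it contributes at most $2$ edges at $x$. This gives at least $\deg_G(x)-2$ counted edges.

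\textbf{Main obstacle.} The step needing most care is showing that each cycle child $C$ is pendant in the precise sense of the definition, namely that $G-E(C)$ has at most one component with cycles. This uses that the components below $C$ are acyclic by the depth maximality, and that all of the remaining graph stays attached through $x$ as a single component. I would verify this carefully, including the case where $x$ lies on the root block $B_0$ itself.
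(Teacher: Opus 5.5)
Your proof is correct and is essentially the paper's argument. The paper prunes bridge- and cut-vertex leaves from the block--cut tree and takes the second vertex of a longest path in the pruned tree; you root at a cycle block and take the deepest cut vertex with a cycle-block descendant, which is the rooted form of the same extremal choice. Your separate handling of trees and unicyclic cacti fills in cases the paper's sketch skips (there the pruned tree has no second vertex on a longest path), and your explicit reading of ``lies on a pendant tree'' likewise makes precise what the paper leaves implicit.
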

    It is possible to formally prove these two observations using a block-cut tree $BC_G$ of $G$, i.e., a graph whose vertices are the blocks $B$ and the cut-vertices $A$ of $G$, and where $ab \in E(BC_G)$ if and only if $a$ is a cut-vertex contained in the block $b$.
    Blocks of a cactus are cycles and bridges; we refer to the corresponding vertices in $BC_G$ as cycle-vertices and bridge-vertices. 
    The vertices of $BC_G$ that correspond to cut-vertices of $G$ will simply be called cut-vertices.
    We reduce $BC_G$ to a smaller tree $BC_G'$ by repeatedly applying the following step: if the tree contains a leaf which is a bridge-vertex or a cut-vertex, remove it.
    The leaves of the resulting tree $BC_G'$ correspond to pendant cycles.
    Moreover, the second vertex on a longest path in $BC_G'$ is a cut-vertex $x$ described in Observation~\ref{cactus_good_x}. 

    Such a structure of pendant cycles was used in the study of locally irregular colorings before; see, for example, \textit{grapes} in~\cite{Sedlar2024} (equivalent to Observation~\ref{cactus_good_x}).
    With these structural properties in mind, we can prove the main theorem of this section:
    \begin{theorem}\label{thm_cactus}
        If $G$ is a cactus graph, then $\tlir(G) \leq 2$.
    \end{theorem}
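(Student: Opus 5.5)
We argue by induction on the number of cycles of $G$, and within that on $|V(G)|$; disconnected graphs are handled componentwise, so we take $G$ connected. If $G$ has no cycle, it is a tree, hence bipartite, and Proposition~\ref{prop_bip} gives $\tlir(G)\le 2$. Otherwise we take the vertex $x$ from Observation~\ref{cactus_good_x}. At least $\deg_G(x)-2$ of the edges at $x$ lie on pendant cycles or pendant trees hanging at $x$, and at most two edges at $x$ lead to the rest of the graph. We form $G'$ by deleting all pendant cycles at $x$ (keeping $x$) together with all pendant trees at $x$, except their attachment vertex. Then $\deg_{G'}(x)\le 2$. If at least one pendant cycle was removed, $G'$ has fewer cycles, so induction gives a red-blue TLIR coloring of $G'$. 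If nothing was removed, we pick a different decomposition instead, e.g.\ a leaf pendant cycle $C$ in $BC_G'$ attached at a single cut-vertex $x$.

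The main step is to reattach the pendant structures at $x$ so that $x$ gets a controllable pair of total red and blue degrees. Lemma~\ref{lemma_pendant_tree} already handles the case of a single tree: with $\deg_{G'}(x)\le 2$, pendant trees can be attached monochromatically. We generalize its argument to a "flexible gadget" consisting of the pendant cycles and trees at $x$, with $x$ uncolored. Each pendant tree admits the four colorings of Proposition~\ref{prop_bip}. Each pendant cycle $C$ through $x$ has $x$ of degree $2$ in $C$, and all its other vertices, except possibly roots of pendant trees, have degree $2$. We want several TLIR colorings of the cycle part, with distinct contributions (red, blue) at $x$, such as $(2,0)$, $(1,1)$, $(0,2)$, in each case keeping the neighbors of $x$ on the cycle at prescribed parity or degree. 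Here we would first try to show that a path $x\,u_1\dots u_k\,x$ can always be colored so that the internal vertices are irregular in both colors. We do this by hand on short paths and use the parity coloring of Proposition~\ref{prop_bip} on the subpath, with both ends anchored at $x$. We then adjust the colors of the internal vertices, as in the last step of Lemma~\ref{lemma_pendant_tree}, to fix conflicts with $x$.

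Summing these options gives many possible values of (red degree, blue degree) of $x$, as the colors of its pendant edges and of $x$ itself vary. The constraint comes only from at most two neighbors $u_1,u_2$ of $x$ in $G'$, each forbidding at most one value of the total degree of $x$ in the color of the edge $xu_i$. So at most two forbidden values must be avoided, and the family of achievable degree pairs should be large enough to do so. This parallels the counting of the four colorings in Lemma~\ref{lemma_pendant_tree}. After fixing the colors at $x$, we recheck the conflicts between $x$ and its neighbors inside the pendant parts. We fix them by swapping the colors of all non-$x$ vertices of the offending pendant component, which changes each of their total degrees in both colors by one in opposite directions. This preserves irregularity within that component, because all of them change simultaneously. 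This needs care: swapping vertex colors in a component changes red degrees by $\pm1$ non-uniformly. So we would instead choose, per component, between two parity classes from Proposition~\ref{prop_bip}.

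We expect the main obstacle to be odd pendant cycles, and in particular triangles, where the cycle is not bipartite. The vertices $x,u,w$ with $u,w$ of degree $2$ must be irregular in both colors. We would settle small cycles ($C_3$, $C_4$, $C_5$) by explicit case analysis of their colorings relative to $x$, listing which (red, blue) contributions at $x$ are realizable together with the degrees forced on the neighbors of $x$. For longer cycles, we cut at a vertex and use path colorings, reducing to the tree situation handled by Lemma~\ref{lemma_pendant_tree}.
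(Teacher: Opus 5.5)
Your outline has a genuine gap. The reattachment at $x$ is the whole content of the theorem, and it is never carried out; the mechanism you sketch for it fails as stated.

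You treat the at most two neighbors of $x$ in $G'$ as the only source of forbidden values for the color degrees of $x$. You then choose the gadget contributions to avoid those values and plan to repair conflicts inside the pendant parts afterwards. But the cycle gadgets impose constraints that no later repair can fix once the contribution is chosen. Take a pendant triangle $xuw$ with contribution $(2,0)$:
\begin{itemize}
\item if $uw$ is red, $u$ and $w$ must get different vertex colors, so their total red degrees are $2$ and $3$;
\item if $uw$ is blue, exactly one of $u,w$ is red, and that one has total red degree $2$.
\end{itemize}
So under this contribution $x$ can never end with total red degree $2$, whatever happens inside the triangle.

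Your repair tools do not apply here. Swapping vertex colors is safe only for the parity colorings of Proposition~\ref{prop_bip}, and those do not exist on odd cycles. The small cycles are only promised, not done. ``Cut at a vertex and reduce to the tree case'' leaves open exactly the re-closing step that needs proof. You also keep pendant trees hanging at non-$x$ vertices of the pendant cycles inside your gadgets. The paper first removes them using minimality and Lemma~\ref{lemma_pendant_tree} (such a vertex has degree $2$ in the rest), so every non-$x$ vertex of a pendant cycle becomes a $2$-vertex.

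The missing idea is to fix all color degrees at $x$ before touching the cycles. The paper opens each pendant cycle $C_i=(x,x_1,\dots,x_n)$ by deleting $x_2,\dots,x_{n-1}$ (or the edge $x_1x_2$ for a triangle). All pendant cycles and trees at $x$ then form a single tree $T_x$ meeting the smaller cactus $G'_x$ only in $x$, with $\deg_{G'_x}(x)\le 2$. One application of Lemma~\ref{lemma_pendant_tree} gives a coloring in which all edges of $T_x$, including every $xx_1$ and $xx_n$, are blue, and all $T_x$-neighbors of $x$ have the same color. The color degrees of $x$ are then fixed, and the blue edges at $x$ are already conflict-free.

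Each cycle can then be closed independently by coloring the path $x_1\dots x_n$ mostly red:
\begin{itemize}
\item for even cycles, vertex colors alternate along the red path;
\item for odd cycles of length at least $7$, a small modification of this is used;
\item for lengths $3$ and $5$, there is an explicit split on whether the blue degree of $x$ equals $2$. For the triangle this includes swapping the colors of $x$ and $xx_2$, which leaves both color degrees of $x$ unchanged.
\end{itemize}
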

    \begin{proof}
    Suppose that the theorem is false.
    Let $G$ be the minimal (with respect to $|E(G)|$) cactus with $\tlir(G) \geq 3$.

    Note that $G$ contains cycles; 
    otherwise $G$ is a tree, and by Proposition~\ref{prop_bip} $\tlir(G) \leq 2$.

    Let $C$ be a pendant cycle of $G$, and let $x$ be a vertex of $C$ such that every vertex $x' \in V(C) \setminus \{x\}$ lies in a tree component of $G - E(C)$.
    We claim that every such $x' \in V(C) \setminus \{x\}$ is a 2-vertex.
    If this was not the case, we could split $G$ into a smaller cactus with $\deg_{G'}(x') = 2$ and a tree $T$ such that $V(G') \cap T = \{x'\}$.
    After this, we could apply Lemma~\ref{lemma_pendant_tree}.

    Therefore, every pendant cycle contains at most one vertex of degree at least 3.
    Take a vertex $x$ described in Observation~\ref{cactus_good_x}.
    Let $C_1, \dots, C_k$ and $T_1, \dots, T_\ell$ be pendant cycles and trees attached to $x$ such that at most two edges $xy_1, xy_2$ are not contained in any of them.
    We transform $G$ into a cactus $G'$ by opening the cycles $C_1, \dots, C_k$:
    If $C_i = (x,x_1, \dots, x_n)$, we remove vertices $x_2, \dots, x_{n-1}$ or an edge $x_1x_n$ if $C_i$ has lenght 3.

    After this, $G'$ can be split into a tree $T_x$ and a smaller cactus $G'_x$ where $V(T_x) \cap V(G_x') = \{x\}$, and $\deg_{G_x'}(x) \leq 2$.
    We use Lemma~\ref{lemma_pendant_tree} to obtain a locally irregular total coloring $c'$ of $G'$ where all edges of $T_x$ are blue, and all neighbors of $x$ in $T_x$ have the same color.

    Now, we extend $c'$ into a locally irregular total coloring of $G$. The coloring is independently extended for each $C_i$. 
    We distinguish several cases, depending on the length of $C_i$ and the colors of $x_1$ and $x_n$. 
    Note that, if the length of $C_i$ is 3 or 5 (Cases 1 and 2), some recoloring of the vertices $x,x_1,x_2$ and the edge $xx_2$ is needed.

    \textbf{Case 1.} $C_i$ has length 3.
    If the blue degree of $x$ is different from 2, then color $x_1$ blue, color $x_1x_2$ and $x_2$ red. 
    
    If the blue degree of $x$ equals 2, then all edges incident to $x$ except $xx_1$ and $xx_2$ are red. Recolor $x$ to blue and $xx_2$ to red; this does not change the color degrees of $x$. Next, color $x_1$ and $x_1x_2$ blue. Coloring the vertex $x_2$ red or blue does not create a conflict on $x_1x_2$ since the blue degree of $x_1$ is 3. Hence, we can color $x_2$ with a color that does not create a conflict on the red edge $xx_2$.

    \textbf{Case 2.} $C_i$ has length 5.
    If the blue degree of $x$ equals 2, then color $x_1$ and $x_1x_2$ blue, and all elements of path $(x_2, x_3, x_4)$ red. 
    
    If the blue degree of $x$ is not 2, then color $x_1$, $x_3$, and $x_4$ blue. 
    Next, color $x_2$ and all the edges of the path $(x_1,\dots, x_4)$ red.

    \textbf{Case 3.} $C_i$ has even length, i.e., $n$ is odd.
    Color the edges of the path $(x_1,x_2, \dots, x_n)$ red. 
    Next, color the vertices $x_3, x_5 \dots, x_{n-2}$ blue, and $x_2,x_4, \dots, x_{n-1}$ red.

    \textbf{Case 4.} $C_i$ has odd length, at least 7, i.e., $n \geq 6$ is even.
    If $x_1,x_n$ are blue in $c'$, then each edge of the path $(x_1,x_2, \dots, x_n)$ is colored red. 
    Next, the vertices $x_3, x_5, \dots, x_{n-1}$ are colored blue, and $x_2,x_4, \dots, x_{n-2}$ are colored red.

    If $x_1,x_n$ are red in $c'$, then color $x_1x_2$, $x_2$, $x_2x_3$ and $x_4$ red and $x_3$, $x_3 x_4$ blue. Then color the path $(x_4, \dots, x_n)$ as in Case 3.
\end{proof}

\subsection{Subcubic graphs}

\begin{theorem}\label{thm_subcubic}
    If $G$ is a subcubic graph, then $\tlir(G) \leq 2$.
\end{theorem}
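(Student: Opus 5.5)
We argue by a minimal counterexample, exactly as in the proof of Theorem~\ref{thm_cactus}. Let $G$ be a subcubic graph with $\tlir(G)\geq 3$ and $|E(G)|$ minimum. We may assume $G$ is connected. By Proposition~\ref{prop_bip} $G$ is not bipartite, and by Proposition~\ref{prop_reg} $G$ is not $3$-regular (nor $2$-regular). Hence $G$ has a vertex of degree at most $2$. The plan is then a sequence of reductions.

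\textbf{Step 1 (bridges).} Suppose $G$ has a bridge $e=uv$ such that the side containing $v$ is a tree $T$. Delete $T-v$ and keep $e$ attached to $u$. If $\deg_{G-(T-v)}(u)\leq 2$ after this removal, apply Lemma~\ref{lemma_pendant_tree} at $u$ to the remaining graph, which has a coloring by minimality. We also try to handle bridges more generally: split $G$ at a bridge $uv$ into $G_1\ni u$ and $G_2\ni v$. Color $G_1+uv$ and $G_2$ by minimality, then swap colors globally in $G_2$ and/or switch vertex colors so that $u$ and $v$ are distinguished on $uv$. Since $u$ and $v$ have degree at most $3$, only a bounded number of degree patterns arise. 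After these steps we may assume $G$ is $2$-edge-connected, or at least has no pendant trees. In particular every $1$-vertex is gone, and every vertex of degree $2$ lies on a cycle.

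\textbf{Step 2 (threads of $2$-vertices).} Take a maximal path $P=(x,x_1,\dots,x_n,y)$ whose internal vertices all have degree $2$, with $x$ and $y$ of degree $3$. Such a path exists unless $G$ is a cycle, and cycles are cacti, covered by Theorem~\ref{thm_cactus}. Remove the internal vertices $x_1,\dots,x_n$. Then $x$ and $y$ have degree $2$ in $G'=G-\{x_1,\dots,x_n\}$, and $G'$ has a TLIR $2$-coloring $c'$ by minimality. Re-inserting $P$ adds exactly one edge at each of $x$ and $y$. We extend $c'$ by a case analysis on $n \bmod 2$ (or $\bmod 4$ for small $n$) and on the color degrees of $x$ and $y$ in $c'$. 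This mirrors Cases~1--4 of the cactus proof: color the interior path with alternating vertex colors as in Proposition~\ref{prop_bip}, and allow recoloring of the vertices $x$ and $y$ themselves, since each has only two neighbors in $G'$. For short threads ($n\leq 2$) we expect to need to recolor one edge at $x$ or $y$, as in Cases~1 and~2 above.

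\textbf{Step 3 (no degree-$2$ vertices left).} If the reductions above always apply, $G$ must be cubic, which is excluded by Proposition~\ref{prop_reg}.

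The main obstacle is Step~2 with $n\in\{1,2\}$ when $x$ and $y$ are adjacent or share neighbors. In that case the local configuration is small, but a conflict at $x$ can propagate to its two old neighbors, whose colors are fixed by $c'$. If the direct case analysis fails, we would first try choosing a different minimal configuration instead: pick a $2$-vertex $w$, delete it, color $G-w$ (which has two new vertices of degree $\leq 2$), and use the freedom of recoloring those degree-$\leq2$ endpoints, as in the proof of Lemma~\ref{lemma_pendant_tree}, where four colorings were compared to avoid at most two conflicts. Counting the available options against the number of forbidden degree values (at most $3$ neighbors, each forbidding one value per color) should leave at least one valid extension.
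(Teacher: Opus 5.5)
There is a genuine gap, and it is in the case you yourself flag as the main obstacle: a thread with $n=1$, i.e.\ a $2$-vertex $x_1$ whose neighbors $x,y$ are $3$-vertices. Deleting $x_1$ and extending a TLIR coloring $c'$ of $G-x_1$ can fail even if you recolor $x$, $y$, $x_1$, $xx_1$, $x_1y$ freely, so your closing counting heuristic is false. Nothing prevents $c'$ from having the following local form. The edges $xa,xb$ are blue and $x$ is red, with total blue degrees of $a,b$ equal to $3,4$. The edge $yc$ is red, $yd$ is blue and $y$ is red, with total red degree of $c$ equal to $1$ and total blue degree of $d$ equal to $2$. This is consistent with $c'$ being TLIR.

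In $G$, $x$ has total degree $4$ and its blue degree must avoid $\{3,4\}$. This forces $x$ and $xx_1$ to be red, so $x$ has red degree $2$. At $y$ the red degree must avoid $1$ and the blue degree must avoid $2$. This forces $y$ and $x_1y$ to be red, so $y$ has red degree $3$. Now $x_1$ has two red edges, so its red degree is $2$ or $3$, and it conflicts with $x$ or with $y$ whichever color it gets. Repairing this means recoloring $xa$, $xb$, and so on, and conflicts then propagate into $G'$ with no control. The root cause is that deleting a thread lowers the degrees of its endpoints. Re-inserting it then shifts their color degrees against neighbors whose colors are fixed.

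The paper avoids this in two ways. For two adjacent $2$-vertices it does not delete the thread $x_0x_1x_2x_3$. Instead it replaces it by the edge $x_0x_3$, which keeps the color degrees of $x_0,x_3$ unchanged, so only the interior needs coloring (with separate handling when $x_0x_3\in E(G)$ or $x_0=x_3$).

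For isolated $2$-vertices there is no local reduction at all. The paper attaches a gadget $W$ ($K_4$ with one edge subdivided by $w_0$) to each $2$-vertex $x$ via $xw_0$, producing a cubic graph. It then uses the internal structure of the Deng--Qiu construction: vertices of the first maximum independent set $V_0$ receive total red degree $4$. Choosing $V_0$ with as few original $2$-vertices as possible gives enough slack to delete the gadgets one at a time, recoloring only $x$ (and in one case also $xy$ and $y$). Some such global ingredient is what your proposal lacks.

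Separately, your general bridge-gluing in Step 1 is unsupported, but you do not need it. A $1$-vertex can be removed directly by Lemma~\ref{lemma_pendant_tree}, since its neighbor has degree at most $2$ afterwards.
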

\begin{proof}
Suppose that the theorem is false. 
Let $G$ be a subcubic graph with $\tlir(G) \geq 3$, the minimum value of 
\begin{align*}
    s(G) = \sum\limits_{v \in V(G)} 3 - \deg_G(v),   
\end{align*}
and the minimum number of vertices among the subcubic graphs $G'$ with $\tlir(G') \geq 3$ and $s(G') = s(G)$. 
From the minimality, we immediately get that $G$ is connected.
We prove three structural claims \linebreak on $G$.

The first one follows directly from Lemma~\ref{lemma_pendant_tree}.
\begin{claim}\label{claim_subcubic_1}
    $G$ does not contain a $1$-vertex. 
\end{claim}

\begin{claim}~\label{claim_subcubic_2}
    $G$ does not contain two adjacent $2$-vertices.
\end{claim}
\begin{claimproof}{claim_subcubic_2}
    Suppose to the contrary that there is a path $x_0,x_1,x_2,x_3$ in $G$ such that $x_1$ and $x_2$ are 2-vertices in $G$.
    Note that $x_1x_3$ might be an edge in $G$, or $x_1$ and $x_3$ might be the same vertex; we consider several cases.

    \textbf{Case 1.} $x_0 \neq x_3$ and $x_0x_3 \notin E(G)$.
    In this case, let $G'$ be the graph obtained from $G-x_1-x_2$ by adding the edge $x_0x_3$.
    We have $s(G') = s(G) - 2$.
    Take a red-blue TLIR coloring $c$ of $G'$.
    W.l.o.g. suppose that $x_0x_3$ is red, and the total red-degrees of $x_0$ and $x_3$ are $a$ and $b$, respectively.
    Moreover, assume that $a > b$; otherwise we switch the roles of $x_0$ and $x_3$.
    In total, we get six cases; $(a,b) \in \{(2,1),(3,1),(4,1),(3,2),(4,2),(4,3)\}$.
    To extend $c$ into a TLIR 2-coloring of $G$, color the edges $x_0x_1$ and $x_2x_3$ red and:
    \begin{itemize}
        \item[(i)] $x_1$ and $x_1x_2$ blue, and $x_2$ red, if $a \in \{2,3,4\}$ and $b \in \{1,3\}$;
        \item[(ii)] $x_1$ blue, $x_1x_2$ and $x_2$ red, if $a \in \{3,4\}$ and $b =2$. 
    \end{itemize}
    See Figure~\ref{subcubic2} for illustration.

    \begin{figure}[h!]
        \centering
        \includegraphics[width=13cm]{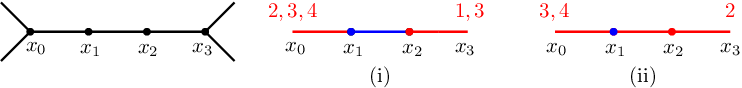}
        \caption{TLIR 2-colorings of $G$ from the case when $x_0 \neq x_3$ and $x_0x_3 \notin E(G)$.}
        \label{subcubic2}
    \end{figure} 

\textbf{Case 2.} $x_0 \neq x_3$, but $x_0x_3 \in E(G)$, i.e., vertices $x_0,x_1,x_2,x_3$ induce $C_4$.

If $\deg_G(x_0)=\deg_G(x_3) = 3$, then let $G' = G - x_1-x_2$.
Hence, $G'$ has the same number of 2-vertices, and we did not create any 1-vertex.
Thus $s(G') = s(G)$, but $G'$ is smaller.
Take a red-blue TLIR coloring of $G'$, and uncolor the vertices $x_0,x_3$ and the edge $x_0x_3$.
Depending on the colors of the edges incident to $x_0$ and $x_3$, $x_0y_0$ and $x_3y_3$, and the color degrees of $y_0$ and $y_3$ in this partial coloring, we extend this partial coloring into a TLIR 2-coloring of $G$. See Figure~\ref{subcubic3}.
Note also, that vertices $y_0$ and $y_3$ can be the same; the colorings presented in Figure~\ref{subcubic3} cover this case, too (cases (iii) and (viii) are, however, unobtainable).

\begin{figure}[h]
    \centering
    \includegraphics[width=11.5cm]{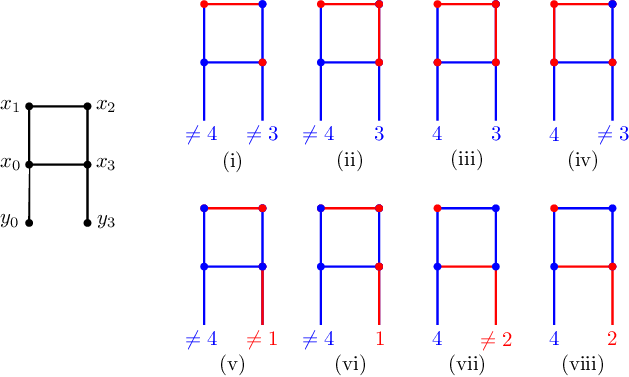}
    \caption{TLIR 2-colorings of $G$ from the case when $x_0 \neq x_3$, but $x_0x_3 \in E(G)$ in the first subcase.}
    \label{subcubic3}
\end{figure} 

Suppose that $\deg_G(x_0) = 3$ and $\deg_G(x_3) = 2$.
Then $C_4$ induced on $x_0,x_1,x_2,x_3$ is pendant in $G$.
Let $G' = G - x_1-x_2-x_3$.
Clearly, $s(G') < s(G)$.
Take a red-blue TLIR coloring $c$ of $G'$ where the edge incident to $x_0$ is blue.
We extend this coloring into a TLIR 2-coloring of $G$ using Proposition~\ref{prop_bip}: edges of the removed $C_4$ are red, and the color of $x_0$ in $c$ matches its color in the coloring of $C_4$.

If $\deg_G(x_0) = \deg_G(x_3) = 2$, then $G$ is a cycle on 4 vertices, which has a TLIR 2-coloring, see Proposition~\ref{prop_bip}.

\textbf{Case 3.} $x_0 = x_3$. 
If $\deg_G(x_0) = 2$, then $G$ is a cycle on three vertices. Regular graphs have TLIR 2-coloring, see Proposition~\ref{prop_reg}.

Therefore, assume that $\deg_G(x_0) = 3$.
Let $G' = G - x_1 - x_2$.
Clearly, $s(G') = s(G)$, but $G'$ is smaller.
Take a red-blue TLIR coloring of $G'$ where the edge incident to $x_0$, say $x_0y_0$, is blue.
Remove a color from $x_0$ and denote the obtained partial coloring by $c$.
In Figure~\ref{subcubic5} we provide an extension of $c$ into a TLIR 2-coloring of $G$, depending on the blue degree of $y_0$.

\begin{figure}[h!]
\centering
\includegraphics[width=6.5cm]{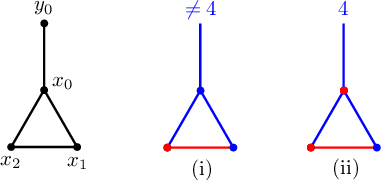}
\caption{TLIR 2-colorings of $G$ from the case when $x_0 = x_3$.}
\label{subcubic5}
\end{figure} 
\end{claimproof} 

\begin{claim}\label{claim_isolated_2-vertices}
    $G$ does not contain any $2$-vertices.
\end{claim}
\begin{claimproof}{claim_isolated_2-vertices}
    From Claims~\ref{claim_subcubic_1} and~\ref{claim_subcubic_2} we get that $G$ contains only 2-vertices and 3-vertices,   and 2-vertices are independent.
    Denote by $X$ the set of 2-vertices of $G$, and suppose $|X| \geq 1$.

    Denote by $W$ the graph which is obtained from a complete graph $K_4$ with vertices $w_1,w_2,w_3,w_4$ by subdividing the edge $w_1w_2$ with a vertex $w_0$.
    Let $G'$ be the graph obtained from $G$ by attaching a copy of $W$ by an edge $xw_0$ for every $x \in X$. See Figure \ref{subcubic6} for illustration.

\begin{figure}[h!]
\centering
\includegraphics[width=3.5cm]{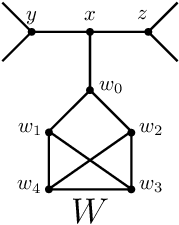}
\caption{A copy of the graph $W$ attached by an edge $xw_0$ to a vertex $x \in X$ from $G$.}
\label{subcubic6}
\end{figure} 

    Since $G'$ is cubic, it has a red-blue TLIR coloring, see Proposition~\ref{prop_reg}. 
    However, to prove Claim~\ref{claim_isolated_2-vertices} we will need the property of such a coloring, which is described in the proof of Theorem~1 from~\cite{Deng2025}.
    In this proof, the authors provide a construction of a neighbor-total-sum-distinguishing $\{0,1\}$-coloring of every graph $G$. 
    First, they take a maximum (in terms of cardinality) independent set $V_0$ in $G$, then the maximum independent set $V_1$ in $G-V_0$, etc. 
    In general $V_i$ is the maximum independent set of $G - \bigcup_{j=0}^{i-1}V_j$. 
    Then they provide a construction of a neighbor-total-sum-distinguishing $\{0,1\}$-coloring in which every vertex $v_i \in V_i$ has its total weight equal to $i$.
    Note that their construction does not specify which maximum independent set to take; we can choose any independent set with maximum cardinality as $V_0$.

    As we mentioned in Section~\ref{introduction}, this $\{0,1\}$-coloring, in the case of regular graphs, is a locally irregular total coloring; elements with weight 0 are red, and those with weight 1 are blue.
    In particular, this means that vertices from $V_0$ have total red-degree equal to 4, vertices from $V_1$ have total red-degree equal to 3, etc.

    In our case, take a maximum independent set $V_0$ such that $|V_0 \cap X|$ is minimal, i.e., $V_0$ contains as few original 2-vertices as possible.
    This means that if $y$ and $z$ are neighbors of some $x \in X$ in $G$, and $x \in V_0$, then both $y$ and $z$ have at least one other neighbor from $V_0$.
    Otherwise, we could exchange $x$ with $y$ or $z$.
    Moreover, if $x \in X$ is not in $V_0$, then from the copy of $W$ that is attached to $x$, exactly 2 vertices are in $V_0$. 
    We will assume that in this case $w_1,w_2 \in V_0$.

    Observe that at least one of $x,y,z,w_0$ is in $V_0$, because $V_0$ is a maximum independent set.
    We distinguish cases depending on whether both, one, or none of $x,y$ are in $V_0$.

    \textbf{Case 1.} $y,z \in V_0$. In this case, we simply remove $W$, and, in what is left, we have that the total red-degree of $x$ is at most 3, which is not in conflict with the total red-degrees of $y$ and $z$ that are equal to 4.

    \textbf{Case 2.} $y \in V_0$ and $z \notin V_0$ (the case when $y \notin V_0$ and $z \in V_0$ is symmetric).
    If there is a conflict after we remove $W$, it is between $x$ and $z$ (because the total red-degree of $y$ is 4 and the total red-degree of $x$ is at most 3). 
    Then we can change the color of $x$ to solve this conflict between $x$ and $z$, and such a change will not produce a conflict between $x$ and $y$.   

    \textbf{Case 3.} $x \in V_0$. 
    Since $V_0$ is a maximal independent set, both $y$ and $z$ have neighbors from $V_0$, i.e., there are at least two red edges incident to each of them. 
    Note that if we remove $W$ and produce a conflict between $x$ and one of its neighbors, we can also recolor $x$ to avoid this conflict. Such a recoloring of $x$ can, however, produce a conflict between $x$ and its other neighbor. 
    If this is the case, then the total red-degrees of $y$ and $z$ are 2 and 3.
    Suppose that the total red-degree of $y$ is 2. Then it is blue, and it is incident to one blue edge. 
    We can recolor $y$ to red, edge $xy$ to blue, and $x$ to red. 

    Observe that in all cases, we did not change the total color-degrees of neighbors of $x$. Using the fact that vertices from $X$ are independent, we can remove copies of $W$ one-by-one, and in each step solve the possible new conflicts between the 2-vertex $x$ and its neighbors.
    This yields a TLIR 2-coloring of $G$, a contradiction.
\end{claimproof}

Claims~\ref{claim_subcubic_1}, \ref{claim_subcubic_2}, and \ref{claim_isolated_2-vertices} yield that $G$ is, in fact, cubic.
However, $\tlir(G) \leq 2$ for every regular graph $G$, a contradiction.
\end{proof}

\subsection{Split graphs}

A \textit{split} graph is a graph $G$ with a clique $X$ such that $V(G) \setminus X$ is an independent set. 
We will denote by $G[X]$ the complete subgraph of $G$ induced on the vertices from $X$.

Note that $K_2$, $K_3$, and $P_4$ are the only split graphs that are not locally irregular colorable.
For other split graphs $G$, Lintzmayer, Mota, and Sambinelli~\cite{Lintzmayer2021} proved that $\operatorname{lir}(G) \leq 3$.
Moreover, they fully characterized those split graphs that require three colors in a locally irregular coloring.

\begin{lemma}[Corollary of Theorems 1.5 and 3.1 from~\cite{Lintzmayer2021}]\label{lemma_split}
    Let $G$ be a locally irregular colorable split graph with a clique $X = \{x_1, \dots, x_n\}$ and an independent set $Y = V(G) \setminus X$. 
    Let $d_i = |N(x_i) \cap Y|$.
    If $n \geq 3$, $d_1 \geq \cdots \geq d_n$, and
    $\operatorname{lir}(G) = 3$, then
    \begin{itemize}
        \item either $d_1 < \lfloor \tfrac{n}{2}\rfloor$ and $d_2 = 0$,
        \item or $d_1 = d_2 = 1$, $d_3 = 0$ and $n \in \{6,7,8\}$.
    \end{itemize}
\end{lemma}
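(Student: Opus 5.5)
The lemma is labelled a corollary of Theorems~1.5 and~3.1 of~\cite{Lintzmayer2021}. So the plan is not to build colorings but to translate their characterization of split graphs with $\operatorname{lir}(G)=3$ into the two listed conditions on the sequence $d_1 \geq \cdots \geq d_n$. I would proceed in three steps.

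\textbf{Step 1: recall the cited results in our notation.} Theorem~1.5 of~\cite{Lintzmayer2021} gives $\operatorname{lir}(G)\leq 2$ for split graphs under suitable degree conditions on the clique vertices towards the independent set. Theorem~3.1 describes exactly the remaining split graphs, those needing three colors. I would restate both with clique $X=\{x_1,\dots,x_n\}$ ordered by nonincreasing $d_i=|N(x_i)\cap Y|$. The assumption $n\geq 3$ excludes the small uncolorable graphs $K_2$, $K_3$, $P_4$ and degenerate small cliques. Those cases are separately handled there and are irrelevant to us.

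\textbf{Step 2: show their exceptional families satisfy our conditions.} As I recall, the graphs needing three colors are of two types:
\begin{itemize}
\item a complete graph $K_n$ with pendant vertices attached to only one clique vertex, and few enough of them that no 2-coloring separates the clique degrees;
\item a small finite family in which two clique vertices each carry one pendant neighbor, with $n\in\{6,7,8\}$.
\end{itemize}
For the first type I would check that the threshold in their statement is exactly $d_1<\lfloor n/2\rfloor$, with $d_2=0$. For the second type the description reads off directly as $d_1=d_2=1$, $d_3=0$.

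\textbf{Step 3: check the contrapositive.} It suffices to observe that any ordered sequence violating both bullets falls under a hypothesis of Theorem~1.5, which yields $\operatorname{lir}(G)\leq 2$. The violating sequences are those with $d_2\geq 1$ outside the sporadic case, or with $d_2=0$ and $d_1\geq\lfloor n/2\rfloor$. This is a finite case comparison of degree conditions and involves no new construction.

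\textbf{Main obstacle.} The hard part is bookkeeping: matching the exact form of the hypotheses in~\cite{Lintzmayer2021} to our conditions. The paper may phrase these via the number of pendant or independent neighbors of the clique vertex of maximum degree, or via parity conditions on $n$. I would first verify the boundary value $d_1=\lfloor n/2\rfloor$ against small instances, $K_4$ and $K_5$ with pendant vertices, to make sure the inequality is strict as stated. I would then check that the sporadic $n\in\{6,7,8\}$ cases are precisely those listed in Theorem~3.1.
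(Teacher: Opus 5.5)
Your plan is the paper's approach: the paper gives no argument of its own and reads the two conditions off Theorems~1.5 and~3.1 of~\cite{Lintzmayer2021}, and the direction you target (violating both bullets forces $\operatorname{lir}(G)\le 2$) is the one used later. Two corrections for the bookkeeping. First, $X$ must be a maximal clique, i.e.\ no vertex of $Y$ is adjacent to all of $X$; the paper tacitly assumes this when it applies the lemma, and without it the statement fails: $K_4$ split as a triangle $X$ plus one vertex in $Y$ has $n=3$, $d_1=d_2=d_3=1$ and $\operatorname{lir}(K_4)=3$. Second, $K_3$ is excluded by the colorability hypothesis, not by $n\geq 3$.
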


Using the above lemma, Propositions~\ref{prop_reg} and~\ref{prop_bip}, and a simple observation that $\operatorname{lir}(G) \leq 2$ implies $\tlir(G) \leq 2$, we prove the following theorem. 

\begin{theorem}\label{thm_split}
    If $G$ is a split graph, then $\tlir(G) \leq 2$.
\end{theorem}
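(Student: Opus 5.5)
We reduce to a few explicit families and then handle each by hand. First, dispose of the graphs with no locally irregular coloring: $K_2$, $K_3$ and $P_4$. The graphs $K_2$ and $P_4$ are bipartite, so Proposition~\ref{prop_bip} applies, and $K_3$ is regular, so Proposition~\ref{prop_reg} applies. Next, if $G$ is disconnected, every component except one is an isolated vertex, and isolated vertices are trivially colored. So we may assume $G$ is connected and locally irregular colorable. If $\operatorname{lir}(G)\le 2$, we color every vertex with one fixed color and are done. Hence we only need to treat split graphs with $\operatorname{lir}(G)=3$.

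If the clique has $n\le 2$ vertices, then $G$ is a tree (a star or a double star), and Proposition~\ref{prop_bip} applies. So assume $n\ge 3$ and use Lemma~\ref{lemma_split}. This leaves two families:
\begin{itemize}
\item[(a)] $G$ is $K_n$ with $d_1<\lfloor n/2\rfloor$ pendant vertices attached to $x_1$;
\item[(b)] $G$ is $K_n$, $n\in\{6,7,8\}$, with one pendant vertex at $x_1$ and one at $x_2$.
\end{itemize}

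In family (a), take a TLIR red-blue coloring of $K_n$, which exists by Proposition~\ref{prop_reg}. Then attach the star at $x_1$ using Lemma~\ref{lemma_pendant_tree} style reasoning. That lemma needs $\deg(v)\le 2$, which we do not have, so we argue directly instead. Color all pendant edges with one color, say the color in which $x_1$'s degree is large, and color each leaf with the color that avoids a conflict. A leaf has total degree at most $2$ in each color, while $x_1$'s degree in the chosen color is at least $d_1\ge 1$ plus something. The real issue is that adding $d_1$ edges to $x_1$ may create a conflict between $x_1$ and its clique neighbours.

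To control this, the plan is to exploit the structure of the Deng--Qiu coloring from the proof of Theorem~1 in~\cite{Deng2025}. For $K_n$, the maximum independent sets are singletons, so the $n$ clique vertices get pairwise distinct red total degrees. These are $n$ distinct values in $\{0,\dots,n\}$, and we may choose which vertex plays the role of $V_0$. Pick $x_1$ to be the vertex with extreme degree (red total degree $n$, i.e.\ $x_1\in V_0$). Then color all pendant edges at $x_1$ red. Its red degree increases to $n+d_1$, which exceeds every other red degree, and its blue degree is unchanged. Each leaf is colored blue, so its red degree is $1<n$ and its blue degree is $1$ while $x_1$ has blue degree $0$. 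This needs no restriction on $d_1$.

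Family (b) works the same way: choose $x_1\in V_0$ (red degree $n$) and $x_2\in V_1$ (red degree $n-1$, blue degree $1$). Then take one of two options:
\begin{itemize}
\item Attach the pendant edge at $x_1$ as red, which raises $x_1$ to red degree $n+1$.
\item Attach the pendant edge at $x_2$ as blue, which raises $x_2$ to blue degree $2$. No other clique vertex has blue degree $2$ except the one in $V_{n-2}$ (wait: that vertex has red degree $2$ and blue degree $n-1$), so we must check blue degrees: vertex $x_i\in V_i$ has blue degree $i$. So $x_2$'s new blue degree $2$ collides with the vertex in $V_2$.
\end{itemize}
Instead, attach the pendant edge at $x_2$ as red. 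Then $x_2$ has red degree $n$, which differs from $x_1$'s new red degree $n+1$ and from all others (at most $n-2$).

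The leaves are handled as follows. The leaf at $x_1$ is blue: its red degree is $1$ versus $n+1$, and its blue degree is $1$ versus $0$. The leaf at $x_2$ is also blue: its red degree is $1$ versus $n$, and its blue degree is $1$ versus $x_2$'s blue degree $1$. That is a conflict, so instead color the $x_2$ leaf red. It then has red degree $2$ versus $n$, and blue degree $0$ is irrelevant since the edge is red.

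The main obstacle is justifying that the Deng--Qiu construction indeed allows prescribing which clique vertex lies in $V_0$ and $V_1$, and that the red/blue degree of each $x_i$ is exactly as claimed. Since all singletons are maximum independent sets of $K_n$, any ordering is permitted. This is the same freedom used in the proof of Claim~\ref{claim_isolated_2-vertices}.
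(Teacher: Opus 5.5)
Your reduction and your treatment of family (a) follow the paper's proof. The paper also takes a TLIR 2-coloring of $K_n$, puts $x_1$ at the largest red total degree, colors the pendant edges red and colors the leaves blue.

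\textbf{The gap is in family (b).} The conditions $d_1=d_2=1$, $d_3=0$ do not force two distinct pendant vertices. The unique $Y$-neighbour of $x_1$ and that of $x_2$ may be the same vertex $y$. Then $G$ is $K_n$ plus a vertex $y$ of degree $2$ adjacent to $x_1$ and $x_2$. This graph is not excluded by Lemma~\ref{lemma_split}, and your case analysis never treats it. The paper covers it explicitly by identifying $y_1$ with $y_2$ in its coloring.

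Your method handles it in one more line. Keep $x_1,x_2$ at red total degrees $n$ and $n-1$, color $x_1y$ and $x_2y$ red, and color $y$ blue. Then $x_1,x_2,y$ have red total degrees $n+1$, $n$ and $2$. All other clique vertices have red degree at most $n-2$, and $y$ has no blue edges.

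Relatedly, your claim that $n\le 2$ forces a tree needs the clique $X$ to be chosen maximal, as the paper does. Otherwise a $Y$-vertex adjacent to both clique vertices creates a triangle.

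\textbf{Two smaller points.}

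First, what you call the main obstacle is not one. In any TLIR red-blue coloring of $K_n$, the red total degrees are pairwise distinct, because every pair is adjacent and blue degree equals $n$ minus red degree. Since $K_n$ is symmetric, you may relabel so that $x_1,x_2$ carry the two largest values $r_1>r_2$. Adding red edges then gives $r_1+1>r_2+1>$ every other clique red degree. No appeal to the internal structure of the Deng--Qiu construction is needed.

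Second, you twice compare blue degrees across a red edge, for example ``its blue degree is $1$ versus $x_2$'s blue degree $1$. That is a conflict''. This is not a conflict, since only blue edges constrain blue degrees. So the blue leaf at $x_2$ was already fine. Neither slip affects your final coloring.

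Once the missing case is added, your argument for family (b) is a nice alternative to the paper's explicit figure-based colorings. It is uniform in $n$ and does not use the restriction $n\in\{6,7,8\}$ at all.
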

\begin{proof}
We keep the same notation as in Lemma~\ref{lemma_split}; 
$X = \{x_1, \dots, x_n\}$ be the maximal clique in $G$ such that $Y = V(G) \setminus X$ is an independent set, $d_i$ is the number of neighbors of $x_i$ in $Y$, and $d_1 \geq \cdots \geq d_n$. 

If $n \leq 2$, then $G$ is a tree, and the result follows from Proposition~\ref{prop_bip}.
If $Y = \emptyset$, i.e., $G$ is a complete graph, then the result follows from Proposition~\ref{prop_reg}.

in the following, we assume that $n \geq 3$ and $Y \neq \emptyset$.
By Lemma~\ref{lemma_split}, we need to consider only the following cases: 
\begin{itemize}
\item $d_1 < \lfloor \frac{n}{2} \rfloor$ and $d_2 = 0$, 
\item $d_1=d_2=1$, $d_3 = 0$ and $n\in \{6,7,8\}$.
\end{itemize}
In other cases, $G$ is either $P_4$ (use Proposition~\ref{prop_bip}), or $\operatorname{lir}(G) \leq 2$.

We start with the case when $d_1 < \lfloor \frac{n}{2} \rfloor$ and $d_2 = 0$. 
Proposition~\ref{prop_reg} yields a red-blue  TLIR coloring of $G[X]$. 
W.l.o.g., let $x_1$ have the largest total red-degree. 
The uncolored edges are pendant and incident to $x_1$; we color these edges red, and color the pendant vertices blue.
Since $x_1$ had the largest red-degree among the vertices of $X$, increasing its red-degree does not produce a conflict on $X$.
Vertices from $Y$ have red-degrees equal to 1, so there is no conflict between $x_1$ and its neighbors. 

\begin{figure}[h!]
\centering
\includegraphics[width=1\textwidth]{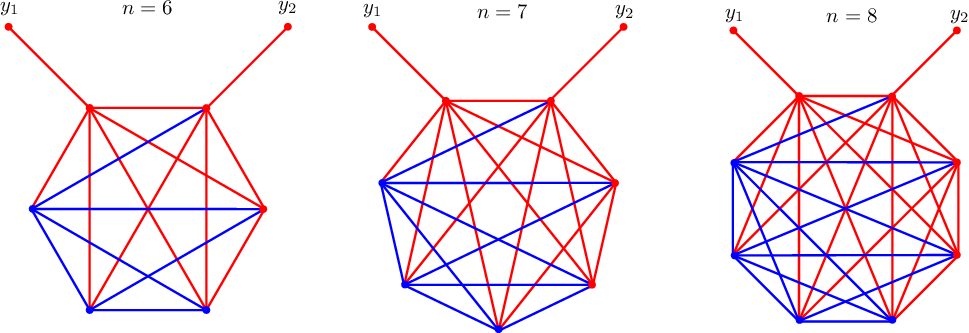}
\caption{TLIR 2-coloring of a split graph $G$ when $n\in \{6,7,8\}$ and $|Y| = 2$.}
\label{split_graph_fig}
\end{figure}

Next, we consider the case when $d_1=d_2=1$, $d_3 = 0$ and $n\in \{6,7,8\}$.
Vertices $x_1$ and $x_2$ can have a common neighbor from $Y$, or their neighbors might be distinct.
In Figure \ref{split_graph_fig}, we present a red-blue TLIR coloring of $G$ when the neighbors $y_1,y_2 \in Y$ of $x_1,x_2$ are distinct.
Keeping the same coloring, but identifying $y_1$ with $y_2$ yields a TLIR 2-coloring if $|Y| = 1$.
\end{proof}

\section{Bounds on \boldmath{$\tlir(G)$} using the chromatic number}\label{sec:chromatic}

The following lemma describes a possible TLIR partial 2-coloring of a bipartite graph with the properties that are crucial for its use in the proof that $\tlir(G) \leq 2\chi(G) - 2$. 

\begin{lemma}\label{lemma_partial_bipartite}
    Let $G$ be a bipartite graph with a bipartition $(X,Y)$. Suppose that $Y$ does not contain isolated vertices. Then there is a TLIR $2$-coloring of $G$ such that:
    \begin{itemize}
        \item[(i)] every vertex from $X$ is red or blue;
        \item[(ii)] every vertex from $X$ has even total red-degree;
        \item[(iii)] every vertex from $Y$ is uncolored; 
        \item[(iv)] every vertex from $Y$ has odd red-degree;
        \item[(v)] uncolored edges are a matching.
    \end{itemize}
    \end{lemma}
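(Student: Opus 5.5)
The idea is to make almost every edge red and use parity, as in Proposition~\ref{prop_bip}. The only difficulty is at vertices $y\in Y$ of even degree. Since $y$ is uncolored by (iii), its red degree is exactly its number of red edges. So at such a $y$ exactly one incident edge must not be red, and these exceptional edges must be placed so that the blue subgraph stays locally irregular and the uncolored edges form a matching.

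\textbf{Construction.} Let $Y_{\mathrm{ev}}$ be the set of vertices of $Y$ with even degree. Since $Y$ has no isolated vertices, every $y\in Y_{\mathrm{ev}}$ has $\deg_G(y)\ge 2$. For each $y\in Y_{\mathrm{ev}}$ choose one incident edge $e_y=x_yy$, and call it \emph{special}; every other edge of $G$ is colored red. For $x\in X$, let $k_x$ be the number of special edges at $x$.
\begin{itemize}
\item If $k_x=1$, leave that special edge uncolored.
\item If $k_x\ge 2$, color all special edges at $x$ blue.
\end{itemize}
Finally, color each $x\in X$ red or blue so that its total red degree $\deg_G(x)-k_x+[x\text{ red}]$ is even; this is possible because the vertex color adds $0$ or $1$. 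Vertices of $Y$ stay uncolored.

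\textbf{Verification of (i)--(v).} Properties (i), (ii) and (iii) hold by construction. For (iv), a vertex $y\notin Y_{\mathrm{ev}}$ has all edges red, so its red degree $\deg_G(y)$ is odd. A vertex $y\in Y_{\mathrm{ev}}$ has red degree $\deg_G(y)-1\ge 1$, which is odd. For (v), each $y$ has at most one special edge, and each $x$ has at most one uncolored edge (only when $k_x=1$), so the uncolored edges form a matching.

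\textbf{TLIR condition.} It only has to be checked on colored edges. Every red edge $xy$ joins a vertex of even total red degree to one of odd red degree, so there is no red conflict. Every blue edge $xy$ has $k_x\ge 2$. Then the total blue degree of $x$ is $k_x$ or $k_x+1$, hence at least $2$. The total blue degree of $y$ is exactly $1$, since $y$ is uncolored and has a single special edge. So the blue subgraph is locally irregular as well.

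\textbf{Main obstacle.} The key step is the treatment at $x$ with $k_x=1$: one blue edge would give $x$ blue degree possibly $1$, in conflict with $y$. I avoid this by leaving that edge uncolored, which is exactly where the matching condition (v) is used. The assumption that $Y$ has no isolated vertices is needed only to guarantee $\deg_G(y)-1\ge 1$ for $y\in Y_{\mathrm{ev}}$, so that a special edge exists and (iv) holds.
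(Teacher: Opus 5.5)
Your proposal is correct and follows essentially the same construction as the paper. The paper phrases it via the star components of the subgraph $G_B$ of chosen edges (stars with at least two edges are blue, $K_2$ components are left uncolored), which is exactly your $k_x\ge 2$ versus $k_x=1$ split. The parity coloring of $X$ and the red/blue verification are likewise the same.
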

    
    \begin{proof}
        For each $y \in Y$ with $\deg_G(y) \equiv 0 \pmod{2}$, we take one of its neighbors $x \in X$; denote the graph induced on these edges by $G_B$.
        Let $G_R$ be the subgraph of $G$ induced on the remaining edges.

        Every component of $G_B$ is a star $S$.
        If $S$ has at least two edges, color these edges blue. 
        If $S$ is $K_2$, leave it uncolored.
        Color the edges of $G_R$ red.
        After this step, the uncolored edges of $G$ induce a matching, and every $y \in Y$ has odd total red-degree.
        Hence, (iv) and (v) hold.

        Now, we color the vertices from $X$.
        If $x \in X$ is incident to an even number of red edges, color it blue.
        If $x \in X$ is incident to an odd number of red edges, color it red.
        After this step, (i), (ii), and (iii) hold.

        Observe that the obtained partial coloring is TLIR.
        In the red subgraph, vertices from $X$ have even total degrees, and vertices from $Y$ have odd total degrees.
        In the blue subgraph, vertices in $Y$ have total degrees at most 1.
        However, if the total degree of some $x \in X$ is 1 in the blue subgraph, then $x$ is blue and it is not incident to any blue edges.
        This finishes the proof. 
    \end{proof}

    With repeated use of the previous lemma, we prove the main result of this section:
    \begin{theorem}\label{thm_k_chromatic}
        For every graph $G$ with $\chi(G) \geq 2$ we have $\tlir(G) \leq 2 \chi(G) - 2$.
    \end{theorem}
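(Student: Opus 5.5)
The plan is an induction-free layered construction that applies Lemma~\ref{lemma_partial_bipartite} once for each colour class beyond the first. Let $k=\chi(G)\ge 2$ and fix a proper colouring with classes $V_1,\dots,V_k$. If $k=2$ the statement is Proposition~\ref{prop_bip}, so assume $k\ge 3$.

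\textbf{Setting up the layers.} For every $i\in\{2,\dots,k\}$, let $H_i$ be the bipartite graph with parts $X_i=V_i$ and $Y_i=(V_1\cup\dots\cup V_{i-1})\cap N(V_i)$, whose edge set is all edges of $G$ between $V_i$ and earlier classes. Every edge of $G$ lies in exactly one $H_i$, namely the one indexed by the larger class of its endpoints. By construction $Y_i$ has no isolated vertices, so Lemma~\ref{lemma_partial_bipartite} applies. Give $H_i$ its own pair of colours $R_i,B_i$; this uses $2(k-1)$ colours in total.

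\textbf{Assembling the colouring.} Every vertex $v\in V_i$ with $i\ge 2$ lies in $X_i$ for exactly one $i$, and it receives its vertex colour (red or blue of pair $i$) there. In every later layer $H_j$ with $j>i$, $v$ is a $Y_j$-vertex and is left uncoloured. Consequently, within colour class $R_j$ the total degree of $v$ is exactly its number of $R_j$-edges, and the parity conclusions (ii) and (iv) of the lemma are unaffected by $v$'s own colour. The class $R_j$ and the class $B_j$ therefore each induce the same total subgraph as in the lemma. Since different layers use disjoint colours, the local irregularity of each colour class holds layer by layer, with two exceptions that remain to be handled:
\begin{itemize}
\item[(a)] the vertices of $V_1$, which are never coloured;
\item[(b)] the uncoloured matching edges provided by (v) in each $H_i$.
\end{itemize}

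\textbf{The main obstacle: the leftover elements.} Both kinds of leftover element must be placed into one of the existing classes without creating conflicts, and I expect this to be the main obstacle.

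For a matching edge $xy$ of $H_i$ with $x\in X_i$, the vertex $x$ has no $B_i$-edges (its $G_B$-component is a single edge). If $x$ is $B_i$, colour $xy$ with $B_i$: then $x$ has $B_i$-degree $2$ and $y$ has $B_i$-degree $1$, so no conflict arises. If $x$ is $R_i$, I would first try to exploit the freedom in the proof of the lemma. The choice of which neighbour is taken for each even-degree $y\in Y$ when forming $G_B$ can be made so that the matching edges always attach to vertices that end up blue. Alternatively, the edge can be coloured with a colour of a different pair in which both endpoints currently have distinct small degrees.

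For $V_1$, the natural choice is to give each $v\in V_1$ the colour $B_2$ or $R_2$ according to its degrees in layer $2$. Failing that, re-choose the $\chi$-colouring with $V_1$ a maximal independent set, so that every other vertex has a neighbour in $V_1$, and use that structure to bound the degrees that could clash.

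If these local fixes do not work directly, the fallback is to strengthen Lemma~\ref{lemma_partial_bipartite}. The strengthened version would let a prescribed subset of $Y$ (here $V_1$) be coloured as well, by adjusting the choice of $G_B$ so that those vertices have even rather than odd degree. It would also guarantee that every uncoloured matching edge meets a blue $X$-vertex. Either change would close the argument within the $2k-2$ colours.
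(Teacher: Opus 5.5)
There is a genuine gap: the leftovers (a) and (b) are the whole difficulty, and none of your fixes is carried out.

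Your decomposition into $H_2,\dots,H_k$ is exactly the paper's, with the colour classes listed in reverse order. Your checks that each layer is locally irregular on its own, and that a leftover edge $xy$ with $x$ blue can be coloured $B_i$, are correct.

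Your main fix is false. Let $H_i$, $i\ge 3$, have a component $z_1x_1yx_2z_2$ with $x_1,x_2\in V_i$ and $z_1,y,z_2\in V_2$. In \emph{any} colouring with properties (i)--(iv), the edges $x_1z_1$ and $x_2z_2$ are red, and $y$ has exactly one non-red edge, say $x_1y$. Then $x_1$ has exactly one red edge, so it must itself be red. Now $x_1y$ cannot be blue, since both of its ends would have blue total degree $1$. So no choice of $G_B$, and no strengthening of Lemma~\ref{lemma_partial_bipartite} that keeps (ii) and (iv) outside $V_1$, makes every leftover edge meet a blue $X$-vertex.

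For red $x$ there is no automatic repair either. Colouring $xy$ with $R_i$ makes the total red degree of $y$ even, the same parity as its red $X_i$-neighbours. A colour from another pair raises degrees in a class where $x$ and $y$ already have neighbours, with no control. The same goes for $V_1$. Colour $R_2$ makes a $Y_2$-vertex even. Colour $B_2$ gives blue total degree $2$ to a vertex whose blue edge lies in a blue $2$-star with a red centre, and that centre also has blue total degree $2$.

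The missing idea is never to leave a leftover edge whose endpoints are both already coloured, and to colour the last layer differently.
\begin{itemize}
\item Process $H_k,H_{k-1},\dots,H_3$ in this order with Lemma~\ref{lemma_partial_bipartite}.
\item Move each leftover edge $xy$ of $H_i$, where $y\in V_j$ with $j<i$ is not yet coloured, into $H_j$ if $j\ge 3$, and into the final layer otherwise. In $H_j$ the vertex $y$ is on the $X$-side and $x$ is a $Y$-vertex of degree exactly $1$, because the leftover edges of one layer form a matching. So (iv) forces $xy$ to be red, and (iii) leaves $x$ uncoloured, so it keeps its colour from layer $i$.
\item Form the final layer from $H_2$ and all deferred edges ending in $V_1\cup V_2$. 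Colour it with Proposition~\ref{prop_bip} rather than with the Lemma, placing each already-coloured endpoint $u$ on the side opposite its unique neighbour $w$. This colours every edge and every vertex, $V_1$ included, and creates no new leftovers.
\item Uncolour each such $u$. If $u$ is on the odd side, it was blue and nothing breaks. If $u$ is on the even side, it was red, and the only possible conflict is when $w$ has total red degree $1$. In that case $w$ has degree $1$ in this layer and is blue, and recolouring $w$ red resolves it.
\end{itemize}
This gives $2(k-2)+2=2k-2$ colours.
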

    \begin{proof}
        Let $k = \chi(G)$, and let $A_1, \dots, A_k$ be the color classes of a proper $k$-coloring of $G$.
        The result follows directly from Proposition~\ref{prop_bip} if $k = 2$.
        Hence, we may assume that $k \geq 3$ and the color classes $A_1, \dots, A_k$ are maximal in the sense that if $v \in A_i$, then $v$ has a neighbor in each $A_j$, $j < i$.

        First, we take a bipartite subgraph $B_1$ of $G$ induced on all edges between $A_1$ and $\bigcup_{i=2}^k A_i$.
        We color $B_1$ using a partial locally irregular total coloring described in Lemma~\ref{lemma_partial_bipartite} with colors 1 and 2.
        After this, every vertex from $A_1$ is colored, every vertex from $\bigcup_{i=2}^kA_i$ is uncolored, and some matching of $B_1$ is uncolored. 
        These uncolored edges will be colored in later steps.
        
        In a general step we will consider a bipartite graph $B_j$ induced on all edges between $A_j$ and $\bigcup_{i=j+1}^kA_i$ and all the uncolored edges incident to vertices from $A_j$ (that have their other ends in $\bigcup_{i=1}^{j-1}A_i$).
        In the end, when $B_{k-1}$ is considered, we include all uncolored edges incident to $A_{k-1}$ and $A_k$. 
        We color this last bipartite graph not using Lemma~\ref{lemma_partial_bipartite}, but in a slightly different way. See Figure \ref{fig2} for better illustration of the construction.

        \begin{figure}[h!]
            \centering
            \includegraphics[width=1\textwidth]{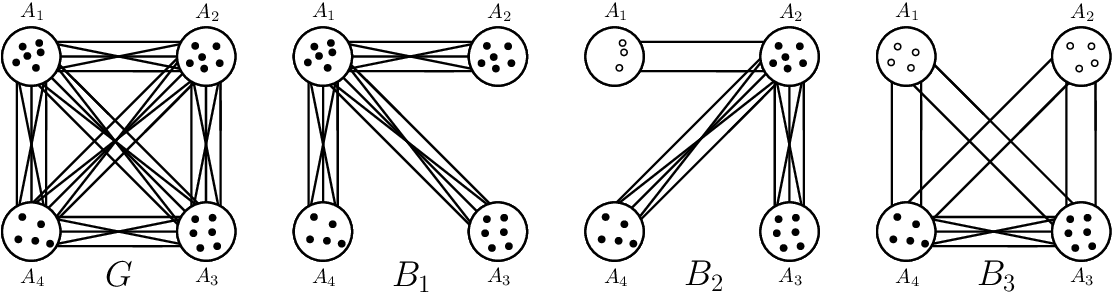}
            \caption{Graph $G$ with $\chi(G) = 4$ and bipartite subgraphs $B_1,B_2,B_3$ considered in the construction of TLIR 6-coloring of $G$. 
            Note that from Lemma \ref{lemma_partial_bipartite} we do not color vertices from color classes $A_2$, $A_3$, $A_4$ in $B_1$, and we do not color vertices from color classes $A_3$, $A_4$ in $B_2$. Additionally, in these partial colorings of $B_1$ and $B_2$, some independent edges might not be colored.}
            \label{fig2}
        \end{figure}

        Now, we will describe in detail how the partial coloring of $B_j$ looks. 
        Note that the created coloring of $G$ has the property that directly before $B_j$ is colored, every vertex from $\bigcup_{i=1}^{j-1}A_i$ is colored, and no vertex from $\bigcup_{i = j}^{k}A_i$ is colored.
       
        Consider now the bipartite graph $B_j$ for $1 \leq j \leq k-2$.
        Consider the partial coloring described in Lemma~\ref{lemma_partial_bipartite} which uses colors $2j-1$ and $2j$, where $X=A_j$ and $Y=B_j \setminus X$.
        Note that $Y$ contains all the vertices from $\bigcup_{i=1}^{j-1}A_i$ which are adjacent by uncolored edges to vertices from $A_j$.
        From (i) of Lemma~\ref{lemma_partial_bipartite} it follows that after this step, every vertex from $A_j$ is colored.
        Let $y \in V(B_j)$ be a vertex from $A_\ell$ for some $\ell \leq j-1$, and let $x \in X$ be its neighbor in $B_j$. 
        Since uncolored edges created in the step when $B_\ell$ was colored were independent, the degree of $y$ in $B_j$ is one.
        In the partial coloring of $B_j$, the total red-degree of $y$ is odd, see~(iv), and $y$ is uncolored. 
        Hence, the edge $xy$ is colored in the partial coloring of $B_j$. 
        Moreover, since in the partial coloring of $B_j$ the vertex $y$ is uncolored, in the coloring of $G$ we can preserve the color assigned to it during the coloring of $B_\ell$.

        After we colored every $B_j$ for $j \leq k-2$, the uncolored elements are: all edges between $A_{k-1}$ and $A_{k}$, some edges going from $A_k \cup A_{k-1}$ to $\bigcup_{i=1}^{k-2}A_i$, and all vertices in $A_{k-1}$ and $A_k$. 
        Recall that $B_{k-1}$ is the graph on all uncolored elements at this step.
        Let $U_{k-1}$ and $U_k$ be the sets of neighbors of $A_{k-1}$ and $A_{k}$ from $\bigcup_{i=1}^{k-2}A_i$ in $B_{k-1}$, i.e., for every vertex $u \in U_{k-1}$ there is a vertex $v$ from $A_{k-1}$ such that $uv$ is uncolored, and the similar holds for $U_{k}$ and $A_k$.
        
        Observe that $U_{k-1} \cap U_k = \emptyset$. 
        Suppose, to the contrary, that this is not the case, and there is some $u \in U_{k-1} \cap U_k$. 
        Let $x \in A_{k-1}$ and $y \in A_k$ be the neighbors of $u$ such that $ux$ and $uy$ would be uncolored. 
        Let $u \in A_\ell$. 
        However, when $B_\ell$ was colored, and some edges going from $A_\ell$ to $\bigcup_{i=\ell+1}^{k}A_i$ were left uncolored, these edges were independent; a contradiction.
        This also shows that the degree of every $u \in U_{k-1} \cup U_k$ is one in $B_{k-1}$.
        
        Let $(X,Y)$ be a bipartition of $B_{k-1}$ such that $X = A_{k-1} \cup U_k$ and $Y = A_{k} \cup U_{k-1}$.
        Consider a TLIR 2-coloring of $B_{k-1}$ with two colors $2k-3$ (red) and $2k-2$ (blue) such that every edge of $B_{k-1}$ is red, vertices from $X$ have even total red-degree, and vertices from $Y$ have odd total red-degrees (see Proposition~\ref{prop_bip}).
        To use this TLIR coloring of $B_{k-1}$ to complete the TLIR coloring of $G$, we need to change it to a partial coloring where vertices from $U_{k-1} \cup U_k$ are uncolored, because these vertices already have colors in the coloring of $G$.
        
        Let $u \in U_{k-1}$. Since $U_{k-1} \subseteq Y$, the total red-degree of $u$ is odd. Therefore, the only edge incident to $u$ is red, and $u$ itself is blue. In this case, we uncolor $u$ in $B_{k-1}$.
        On the other hand, if $u \in U_k$, then $u$ has even total red-degree. Hence, in this case, $u$ and an edge incident to it are red. 
        Uncolor $u$. If a conflict was created between $u$ and its neighbor $y$, it means that $y$ has total red-degree 1. Therefore, $y$ has degree 1 in $B_{k-1}$, because all edges are red. It is sufficient to recolor $y$ to red.
        
        After these adjustments, the partial total coloring is locally irregular, and every element of $B_{k-1}$, except the vertices of $U_{k-1} \cup U_k$, is colored.
        This finishes a TLIR $(2k-2)$-coloring of $G$.
    \end{proof}

\section{Acyclic vertex colorings, star colorings, and TLIR colorings}
\label{sec:acyclic_star_tlir}

Arboricity of a graph, i.e., the minimum number of spanning forests into which the graph can be decomposed, is a well-known and studied invariant.
A lot of special types of arboricity were studied, where some additional constraints are put on the forests (e.g., linear arboricity).
One of such concepts, star arboricity, asks for the minimum number of star forests into which a graph can be decomposed.
By a star forest, we mean a forest whose every component is a star, i.e., a tree in which at most one vertex has degree greater than 1.
By such a definition, $K_2$ is a star; we will say that $K_2$ is a trivial star.
Stars of larger size will be called nontrivial.

From the perspective of locally irregular decompositions, considering a star coloring of a graph seems to some degree natural: every nontrivial star is locally irregular.
However, since some colors may induce isolated edges (trivial stars), it is not true that the star coloring of a graph is always a locally irregular one (any star coloring of $P_4$, for example).
However, if we consider the problem of locally irregular total colorings, the use of colors on vertices gives hope that the star coloring could be somehow adjusted to yield a TLIR coloring.
On the other hand, even if a star coloring of a graph $G$ did not contain trivial stars, the number of required colors could be much larger than 2, which is the conjectured upper bound on $\tlir(G)$. 
For example, for a $d$-regular graph $G$, we have $\tlir(G) \leq 2$ (see Proposition~\ref{prop_reg}), but it is known that the minimum number of colors in a star coloring is at least $12d$ (see~\cite{Algor1989}).
Hence, the connection between star colorings of graphs and TLIR colorings is the most interesting for graphs with relatively small star arboricity.
Among such graphs, we have planar and outerplanar graphs, for which the star arboricity is 5 and 3, respectively~\cite{Hakimi1996}.

In~\cite{Hakimi1996}, Hakimi, Mitchem, and Schmeichel briefly described a connection between star colorings, acyclic (vertex) colorings, and $k$-degenerate graphs.
A proper vertex coloring is acyclic if every two color classes induce an acyclic subgraph (a forest).
A $k$-degenerate graph is a graph whose vertices can be ordered in a list $v_1, \dots, v_n$ such that every vertex $v_i$ has at most $k$ neighbors with smaller indices.
We, to some degree, describe the connection between $k$-degenerate graphs, acyclic vertex colorings, and star colorings here. 
In particular, we show how one can obtain an acyclic vertex 3-coloring of an outerplanar graph, how we can create a star coloring from such a vertex coloring, and that the combination of these two colorings is always a TLIR coloring.
Part of this can be proved in general, namely
\begin{theorem}\label{thm_acyclic_to_TLIR}
If $G$ has a vertex acyclic $k$-coloring, then $\tlir(G) \leq k$.
\end{theorem}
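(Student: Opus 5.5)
Let $A_1,\dots,A_k$ be the color classes of an acyclic vertex $k$-coloring of $G$. The plan is to build the TLIR $k$-coloring directly, using the Hakimi--Mitchem--Schmeichel passage from acyclic colorings to star colorings, and then to color every vertex of $A_i$ with color $i$.

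\textbf{Coloring the edges.} For every pair $i<j$, the subgraph $G[A_i\cup A_j]$ is a forest. Root each of its components at an arbitrary vertex. Every edge $uv$ of $G$ lies in exactly one such forest, because its endpoints lie in two distinct classes. In that forest, one endpoint of $uv$ is the parent of the other. Give $uv$ the color $i$ if its parent endpoint lies in $A_i$.

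\textbf{Structure of a color class.} Fix a color $i$. Every edge of color $i$ has exactly one endpoint in $A_i$, namely its parent endpoint. Now take a vertex $u\notin A_i$, say $u\in A_j$. In the forest $G[A_i\cup A_j]$, $u$ has at most one parent. Edges of color $i$ at $u$ can come only from this forest, and in each of them $u$ must be the child. Hence $u$ is incident to at most one edge of color $i$. Consequently, the edges of color $i$ form a star forest whose centers lie in $A_i$ and whose leaves lie outside $A_i$. This is the step that uses acyclicity, through the uniqueness of parents in a rooted forest.

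\textbf{Coloring the vertices and checking local irregularity.} Give every vertex of $A_i$ the color $i$. Take an edge $uv$ of color $i$ with $v\in A_i$ and $u\notin A_i$.
\begin{itemize}
\item The total degree of $u$ in color $i$ is exactly $1$: $u$ has one edge of color $i$, and $u$ itself is not colored $i$ because it belongs to another class.
\item The total degree of $v$ in color $i$ is at least $2$: $v$ has at least one edge of color $i$, and $v$ itself has color $i$.
\end{itemize}
Every edge of color $i$ has this form, so no conflict occurs in any color class. Vertices that carry color $i$ but have no edges of color $i$ are isolated in the color-$i$ total subgraph and cause no conflict. Thus the coloring is a TLIR $k$-coloring, and $\tlir(G)\le k$.

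The only step needing care is the star-forest claim: that each vertex outside $A_i$ meets at most one edge of color $i$. It rests entirely on choosing the parent endpoint, rather than the child, to determine the edge color. With the child convention, a vertex could be a child in up to $k-1$ different forests, and the color classes would not be star forests.
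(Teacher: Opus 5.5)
Your proof is correct and follows the paper's route. You root each component of the two-colored forests, give each edge the color of its parent endpoint, show that each color class is a star forest centered in the corresponding vertex class, and color vertices by their class. Your uniqueness-of-parent argument is a direct version of the paper's short contradiction argument, and your uniform degree check (center total degree at least $2$, leaf total degree $1$) covers both the trivial and nontrivial stars that the paper handles separately.
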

\begin{proof}
    Let $c_V$ be an acyclic vertex $k$-coloring.
    An edge coloring $c_E$ is constructed from $c_V$ in the following way: Let $a$ and $b$ be two colors in $c_V$. 
    Let $G_{a,b}$ be the forest induced on the vertices of color $a$ and $b$.
    In each component $H$ of $G_{a,b}$, choose any vertex as a root.
    Do a breadth-first search of $H$ from the root and color every edge outgoing from $u$ by the color $c_V(u)$, for every $u \in V(H)$. 
    Hence, every edge in $H$ has a color of the vertex that is closer to the root of $H$.
    We do this for every pair of colors.
    In the end, $c_E$ is an edge coloring of $G$.
    
    We claim that $c_E$ is in fact a star coloring, in which each nontrivial star of color $a$ has a center that has color $a$ in $c_V$. 
    Suppose that this is not the case. 
    This means that there is some vertex $y$ of color $b$ which is incident to at least two edges $xy, yz$ of color $a$, $a \neq b$.
    It follows from the construction of $c_E$ that each edge of color $a$ is incident to a vertex of color $a$.
    Hence $c_V(x) = c_V(z) = a$.
    This, however, means that $x$, $y$, and $z$ lie in the same component $H$ of $G_{a,b}$.
    Suppose that $x$ is closer to the root of $H$ than $z$. Then, however, $xy$ should be colored with $a$, and $yz$ should be colored with $b$, a contradiction.
    
    Note that, if there is an isolated edge of some color $a$ in $c_E$, then it is incident to exactly one vertex of color $a$, since $c_V$ is proper.
    With the previous observation that each nontrivial star of color $a$ has a center that has color $a$ in $c_V$, we get that the combination of $c_V$ and $c_E$ is indeed a TLIR coloring.
\end{proof}
Together with the result of Borodin
\begin{theorem}[Borodin~\cite{Borodin1979}]
Every planar graph has a vertex acyclic $5$-coloring.
\end{theorem}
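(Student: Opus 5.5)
Borodin's theorem is a classical result that we are citing, not proving. The natural plan is to follow Borodin's original approach: discharging combined with reducible configurations. We argue by a minimal counterexample, taking a planar graph $G$ with no acyclic $5$-coloring and with $|V(G)|+|E(G)|$ minimum. We may assume $G$ is connected and, after adding edges where this is safe, close to a triangulation. Adding edges can only make acyclicity harder, so a coloring of the supergraph restricts to one of $G$, and we may take $G$ to be a maximal planar graph with minimum degree at least $3$.

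Step one is to establish a list of reducible configurations. Each of these is a small structure $S$ such that any acyclic $5$-coloring of a smaller planar graph, obtained by deleting or contracting part of $S$, extends to $G$. Basic examples follow.

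A vertex of degree at most $2$: delete it, then recolor it with a color that differs from its neighbors' colors and does not close a bichromatic cycle.

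A $3$-vertex: contract one of its edges, or delete the vertex and add edges among its neighbors so they receive distinct colors, which leaves at least two free colors.

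A $4$-vertex whose neighbors can be forced to be colored so that no bichromatic path through it arises.

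Various $5$-vertices adjacent to low-degree vertices.

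The key lemma for each case is that if the neighbors of $v$ use pairwise distinct colors (forced by adding a clique or cycle on $N(v)$ inside the faces around $v$), then any color not used on $N(v)$ keeps the coloring proper and acyclic. Any new bichromatic cycle through $v$ would need two neighbors of the same color.

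Step two is discharging. We give each vertex charge $\deg(v)-6$ and each face charge $2\deg(f)-6$, so that the total is $-12$ by Euler's formula. Rules then move charge from vertices of degree at least $7$ to adjacent $5$- and $4$-vertices. The goal is to show that, in the absence of the reducible configurations, every element ends with nonnegative charge, which is a contradiction.

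The main obstacle is the reducibility of configurations around $5$-vertices. A $5$-vertex's neighbors can already use all five colors, so the forcing trick fails. Here one must recolor along bichromatic paths (Kempe-type arguments) while preserving acyclicity, and this is where Borodin's long case analysis lives. In the paper itself, we simply invoke the theorem from~\cite{Borodin1979}. Combined with Theorem~\ref{thm_acyclic_to_TLIR}, it immediately gives $\tlir(G)\le 5$ for planar $G$.
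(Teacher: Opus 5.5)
The paper does not prove this statement. It is quoted from Borodin~\cite{Borodin1979} and used only as a black box together with Theorem~\ref{thm_acyclic_to_TLIR}, so your closing move of invoking the citation is exactly what the paper does. The sketch you put around it, however, is not a proof, and one of its concrete steps fails as stated.

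Your key lemma forces the neighbors of $v$ to receive pairwise distinct colors by adding a clique on $N(v)$ in the face created by deleting $v$. In a triangulation this works only when $\deg(v)\le 3$.

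Take a $4$-vertex $v$ whose link cycle $u_1u_2u_3u_4$ has no chord. Only one diagonal, say $u_1u_3$, can be added: with both diagonals, contracting the rest of the graph gives a $K_5$ minor. Adding a cycle on $N(v)$ forces nothing, since the link is already a cycle. So $u_2$ and $u_4$ may both receive some color $c$, leaving two free colors $d,e$ for $v$. Both can be blocked, by a $(c,d)$-bichromatic and a $(c,e)$-bichromatic $u_2$--$u_4$ path in $G-v$.

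Hence the difficulty you place at $5$-vertices already appears at $4$-vertices. Moreover, the list of reducible configurations and the discharging rules are left unspecified, and these are the whole content of Borodin's long paper.

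A smaller point: minimizing $|V(G)|+|E(G)|$ is incompatible with first triangulating $G$. Graphs obtained by reducing the triangulation can have larger $|V|+|E|$ than the original $G$, so minimality cannot be applied to them. You should minimize $|V(G)|$ instead.
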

we immediately get 
\begin{theorem}\label{thm_planar}
    Every planar graph $G$ has $\tlir(G) \leq 5$.
\end{theorem}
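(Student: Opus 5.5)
This statement is a direct composition of two earlier results. Given a planar graph $G$, Borodin's theorem supplies a vertex acyclic $5$-coloring $c_V$ of $G$: a proper coloring with at most five colors in which any two color classes induce a forest. Applying Theorem~\ref{thm_acyclic_to_TLIR} with $k=5$ then gives $\tlir(G)\le 5$. No new construction is needed.

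For completeness, I would spell out how the coloring is obtained, following the proof of Theorem~\ref{thm_acyclic_to_TLIR}.

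\textbf{Edge coloring.} For each pair of colors $a,b$, root every component of the forest $G_{a,b}$. Then give each edge the $c_V$-color of its endpoint that is closer to the root. Each edge lies in exactly one $G_{a,b}$, so this defines an edge coloring $c_E$ with at most $5$ colors.

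\textbf{Structure of each color class.} The color-$a$ class of $c_E$ is a star forest. In every nontrivial star of this class, the center has $c_V$-color $a$. Every isolated edge of color $a$ has exactly one endpoint of $c_V$-color $a$, because $c_V$ is proper.

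\textbf{Local irregularity.} Consider the total subgraph of color $a$, in which the vertices of $c_V$-color $a$ are full.
\begin{itemize}
\item In a nontrivial star, the center has total degree at least $2$. Each leaf has total degree at most $2$, and exactly $1$ if the leaf is empty. A leaf is full only if it has color $a$, which is impossible since it is adjacent to the center. So the leaves have total degree $1$ and the center has total degree at least $2$.
\item In an isolated edge, the endpoint of color $a$ has total degree $2$ and the other endpoint has total degree $1$.
\end{itemize}
In both cases adjacent vertices have different total degrees, so each color class is locally irregular.

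The only step requiring real care is the star-forest property, and it is already proved inside Theorem~\ref{thm_acyclic_to_TLIR}. Here it suffices to cite that theorem and Borodin's result; graphs with fewer colors are handled because a $5$-coloring may leave some colors unused.
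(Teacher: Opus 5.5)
Your proposal is correct and follows the same route as the paper: Borodin's acyclic $5$-coloring combined with Theorem~\ref{thm_acyclic_to_TLIR}, which the paper's proof also cites. Your expanded check of the star-forest structure and the total degrees matches that theorem's proof. Your bound on the center's total degree is only slightly weak: a center of a nontrivial star is full with at least two edges, so its total degree is at least $3$.
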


From the results on vertex acyclic colorings of graphs with maximum degree four~\cite{Burstein1979} and five~\cite{Kostochka2011}, we get:
\begin{corollary}
Let $G$ be a graph with maximum degree $\Delta$. If $\Delta=4$, then $\tlir(G) \leq 5$, and if $\Delta=5$, then $\tlir(G) \leq 7$.
\end{corollary}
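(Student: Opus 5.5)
The corollary follows directly from Theorem~\ref{thm_acyclic_to_TLIR}, so the plan is to quote the right acyclic chromatic number bounds for bounded maximum degree and apply it.

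\textbf{Case $\Delta=4$.} By Burstein~\cite{Burstein1979}, every graph with maximum degree $4$ has an acyclic vertex $5$-coloring. Theorem~\ref{thm_acyclic_to_TLIR} with $k=5$ then gives $\tlir(G)\le 5$.

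\textbf{Case $\Delta=5$.} By Kostochka and Stocker~\cite{Kostochka2011}, every graph with maximum degree $5$ has an acyclic vertex $7$-coloring. Theorem~\ref{thm_acyclic_to_TLIR} with $k=7$ then gives $\tlir(G)\le 7$.

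Theorem~\ref{thm_acyclic_to_TLIR} needs no connectivity or minimum degree assumption: it works for any graph with an acyclic $k$-coloring, including disconnected graphs and graphs with isolated vertices. So no further case analysis is required. It also does not require all $k$ colors to be used, so graphs with smaller maximum degree are covered as well.

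The only real obstacle is making sure the cited results are stated for graphs with maximum degree \emph{at most} $4$ (respectively $5$) and give the stated numbers of colors, namely $5$ and $7$. I believe they do, and otherwise this is a one-line deduction.
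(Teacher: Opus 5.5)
Your proposal is correct and is exactly the paper's argument: apply Theorem~\ref{thm_acyclic_to_TLIR} to the acyclic $5$-coloring of graphs with $\Delta=4$ from~\cite{Burstein1979} and the acyclic $7$-coloring of graphs with $\Delta=5$ from~\cite{Kostochka2011}. Both cited results are stated for exactly these degree bounds and color counts, so your deduction goes through with no further case analysis.
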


Hakimi, Mitchem, and Schmeichel in~\cite{Hakimi1996} mentioned that maximal outerplanar graphs are a special subclass of 2-degenerate graphs.
By a maximal outerplanar graph we mean an outerplanar graph with the property that adding any edge between two nonadjacent vertices would result in a non-outerplanar graph.
It is straightforward to show that the minimum degree of each maximal outerplanar graph (on at least three vertices) is 2.

Let $v_n$ be a 2-vertex in a maximal outerplanar graph $G$.
Denote by $x$ and $y$ the neighbors of $v_n$.
Since $G$ is maximal outerplanar, $x$ and $y$ are adjacent, i.e., they form a clique.
$G - v_n$ is a smaller maximal outerplanar graph, hence, using an induction (the base case is $K_2$), we get
\begin{observation}\label{obs_outerplanar}
If $G$ is a maximal outerplanar graph, then there is an ordering $v_1, \dots, v_n$ of the vertices of $G$ such that each $v_i$ has at most $2$ neighbors with lower indices and these neighbors form a clique.
\end{observation}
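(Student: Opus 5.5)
The plan is induction on $n = |V(G)|$, removing the last vertex of a suitable ordering. The paper has already outlined the inductive step just before the statement. Since $G$ is maximal outerplanar, the preceding discussion gives $\delta(G)=2$ whenever $n\ge 3$. So I fix a $2$-vertex $v_n$ with neighbours $x,y$, show that $xy\in E(G)$, show that $G-v_n$ is again maximal outerplanar, and then apply induction. The base case is $K_2$ (and $K_1$), where any ordering works trivially: each vertex has at most one earlier neighbour, and a single vertex is a clique.

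For the inductive step I would first establish that $x$ and $y$ are adjacent. In an outerplanar embedding with all vertices on the outer face, the two edges $v_nx$ and $v_ny$ are consecutive around $v_n$. Adding the chord $xy$ along the outer face next to $v_n$ preserves outerplanarity, so by maximality $xy$ is already present.

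Next I would show that $G' = G - v_n$ is maximal outerplanar. It is outerplanar as a subgraph. Suppose some nonedge $ab$ could be added to $G'$ keeping it outerplanar. Then I would reinsert $v_n$ as a degree-$2$ vertex attached to the edge $xy$ on the outer face, i.e.\ subdivide a copy of the outer edge $xy$. This is possible as long as $xy$ lies on the outer face of some outerplanar embedding of $G'+ab$. This yields an outerplanar $G+ab$, contradicting maximality of $G$.

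This is the main obstacle: ensuring $xy$ can be kept on the outer boundary after adding $ab$. I would handle it by noting that $x v_n y$ forms a triangle face. In a maximal outerplanar graph every edge lies on the Hamiltonian outer cycle or is a chord. Removing the ear $v_n$ makes $xy$ an outer-cycle edge of $G'$, and adding a chord $ab$ to a maximal outerplanar graph whose outer cycle is Hamiltonian leaves the outer cycle intact. Alternatively, if maximality of $G'$ is awkward, I would avoid it altogether. Instead I would prove the observation for all edge-maximal outerplanar graphs via the standard fact that they are triangulations of a polygon, and peel off an ear of the triangulation, which always exists for $n\ge 3$.

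By the induction hypothesis, $G'$ has an ordering $v_1,\dots,v_{n-1}$ with the required property. Appending $v_n$ gives the ordering of $G$. The vertex $v_n$ has exactly the two earlier neighbours $x,y$, which form a clique by the first step, and the earlier vertices keep their earlier neighbourhoods because $v_n$ comes last.
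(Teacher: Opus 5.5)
Your proposal is correct and follows the paper's own argument. You take a $2$-vertex $v_n$, use maximality to get $xy\in E(G)$, observe that $G-v_n$ is again maximal outerplanar, induct from the base case $K_2$, and append $v_n$ last. The paper simply asserts the two facts you try to justify, so if you keep your justifications, tighten two points. First, the chord $xy$ must be drawn inside the inner face at $v_n$, cutting off the ear; drawing it in the outer face would enclose $v_n$. Second, maximality of $G-v_n$ follows most directly from the count $|E(G)|=2n-3$ for maximal outerplanar graphs. Alternatively, it follows from uniqueness of the Hamiltonian cycle in a $2$-connected outerplanar graph, which is what your ``outer cycle stays intact'' step actually needs.
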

For such a special class of 2-degenerate graphs (or $k$-degenerate in general) an acyclic coloring which uses at most $3$ (or $k+1$ colors in general) colors always exists:
\begin{lemma}[Hakimi, Mitchem, Schmeichel~\cite{Hakimi1996}]\label{lemma_special_degenerate}
If $G$ has an ordering of vertices $v_1, \dots, v_n$ of $G$ such that each $v_i$ has at most $k$ neighbors with lower indices and these neighbors form a clique, then $G$ has an acyclic $(k+1)$-coloring.
\end{lemma}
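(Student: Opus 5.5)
We color greedily along the given ordering $v_1, \dots, v_n$ with the palette $\{1, \dots, k+1\}$, by induction on $i$. Suppose $v_1, \dots, v_{i-1}$ already carry a proper coloring $c$ of $G_{i-1} = G[\{v_1,\dots,v_{i-1}\}]$ in which every two color classes induce a forest. The earlier neighbors of $v_i$ form a clique of size at most $k$, so they use at most $k$ distinct colors. We give $v_i$ any color from $\{1,\dots,k+1\}$ not used on them. Properness of $c$ on $G_i$ is then immediate, and the whole content of the proof is to show that no bichromatic cycle appears.

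Suppose $c(v_i)=a$ and that some cycle $C$ in $G_i$ uses only the colors $a$ and $b$. By the induction hypothesis every such cycle must pass through $v_i$, since $G_{i-1}$ has no bichromatic cycle. The two neighbors of $v_i$ on $C$ then both have color $b$, because $c$ is proper. Both are also earlier neighbors of $v_i$, since all vertices of $G_i$ other than $v_i$ have smaller index. But the earlier neighbors of $v_i$ form a clique, so these two vertices are adjacent, and a proper coloring cannot give them the same color $b$. This is a contradiction, so $G_i$ has no bichromatic cycle. After step $n$ we obtain an acyclic $(k+1)$-coloring of $G$.

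The only real step is the one above: the newly added vertex cannot be the unique new vertex on an $\{a,b\}$-cycle. In a general $k$-degenerate graph this step fails, because two earlier neighbors could share a color. The clique hypothesis is exactly what forces all earlier neighbors of $v_i$ to receive distinct colors. It also guarantees that at most $k$ colors are forbidden, so a free color always exists. Combined with Observation~\ref{obs_outerplanar} and Theorem~\ref{thm_acyclic_to_TLIR}, this gives an acyclic $3$-coloring, and hence $\tlir(G) \le 3$, for every maximal outerplanar graph $G$. For an arbitrary outerplanar graph we first add edges to make it maximal outerplanar. An acyclic coloring of the supergraph restricts to an acyclic coloring of the subgraph, so the same bound holds.
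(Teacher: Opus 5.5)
Your proposal is correct and is essentially the paper's proof: color greedily along the ordering, and observe that a new bichromatic cycle through $v_i$ would force two earlier neighbors of $v_i$ to share a color, which the clique hypothesis rules out. Your version just makes the induction explicit, including the separate case of cycles avoiding $v_i$ and the existence of a free color.
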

\begin{proof}
Color $G$ in a greedy way following the order of vertices $v_1, \dots, v_n$, i.e., when $v_i$ is colored, the smallest available color from $\{1, \dots , k+1\}$ is used.
Consider that after the coloring of some $v_i$, a 2-colored cycle was created. 
This would mean that $v_i$ has two neighbors, which are already colored (therefore they have smaller indices), and they received the same color.
Since we assume that the neighbors of $v_i$ with smaller indices form a clique, this is not possible.
Hence, such a greedy coloring is acyclic. 
\end{proof}

If $G$ is an outerplanar graph, we can add edges between nonadjacent vertices of $G$ to create a maximal outerplanar graph $G'$. 
Combining Observation~\ref{obs_outerplanar} and Lemma~\ref{lemma_special_degenerate} we get that $G'$ has an acyclic vertex 3-coloring. 
Since the removal of edges from $G'$ does not create cycles in any subgraph of $G'$ induced on vertices of two colors, this acyclic coloring of $G'$ is also an acyclic coloring of $G$. 
Hence, every outerplanar graph has an acyclic vertex 3-coloring, and using Theorem~\ref{thm_acyclic_to_TLIR} we get \begin{theorem}\label{thm_outerplanar}
    Every outerplanar graph $G$ has $\tlir(G) \leq 3$.
\end{theorem}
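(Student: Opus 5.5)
The plan is to reduce to the maximal outerplanar case, build an acyclic vertex $3$-coloring there, and pass it back to $G$ and then to Theorem~\ref{thm_acyclic_to_TLIR}, which gives $\tlir(G) \leq 3$ directly.

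\textbf{Reduction to maximal outerplanar.} Since $\tlir$ is taken over all components independently, we may assume $G$ is connected, and if $|V(G)| \leq 2$ the claim is trivial (a single vertex or $K_2$ is a tree, so Proposition~\ref{prop_bip} applies). Otherwise, I will add edges between nonadjacent vertices of $G$ as long as outerplanarity is preserved, obtaining a maximal outerplanar supergraph $G'$ on the same vertex set.

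\textbf{Acyclic $3$-coloring of $G'$.} By Observation~\ref{obs_outerplanar}, $G'$ has an ordering $v_1,\dots,v_n$ in which each $v_i$ has at most two earlier neighbours, and these form a clique. Lemma~\ref{lemma_special_degenerate} then yields an acyclic vertex $3$-coloring $c$ of $G'$.

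\textbf{Passing back to $G$.} Restricting to the spanning subgraph $G$ keeps $c$ proper. For any two colors $a,b$, the subgraph of $G$ induced on colors $a,b$ is a subgraph of the corresponding forest in $G'$, hence still a forest, so $c$ is an acyclic $3$-coloring of $G$. Applying Theorem~\ref{thm_acyclic_to_TLIR} with $k=3$ gives $\tlir(G)\leq 3$.

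The only step that carries any weight is justifying Observation~\ref{obs_outerplanar}: every maximal outerplanar graph on at least three vertices has a $2$-vertex whose two neighbours are adjacent, and deleting it leaves a maximal outerplanar graph. I would argue this from the standard fact that a maximal outerplanar graph is a triangulated polygon. Its weak dual is a tree, and a leaf triangle of that tree contains a vertex of degree $2$ whose neighbours are joined by a polygon chord or side. Since the paper has already stated this observation, I will simply cite it, and the proof is then a direct chain of the cited results.
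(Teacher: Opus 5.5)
Your proposal is correct and follows the paper's own argument: extend $G$ to a maximal outerplanar $G'$, obtain an acyclic $3$-coloring of $G'$ from Observation~\ref{obs_outerplanar} and Lemma~\ref{lemma_special_degenerate}, note that it stays acyclic on the spanning subgraph $G$, and apply Theorem~\ref{thm_acyclic_to_TLIR}. Your extra reductions (to connected $G$, and the $|V(G)|\leq 2$ case) and the weak-dual justification of the observation are sound but not needed.
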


\end{document}